\newcommand{\LC}[1]{\textcolor{black}{#1}}
\newcommand{\R}{{\mathbb R}}  
\newcommand{\Rn}{\R^{n}}
\newcommand{\st}{\,|\,}
\newcommand{\abs}[1]{\left|#1\right|}
\newtheorem{theorem}{Theorem}[section]
\newtheorem{lemma}{Lemma}[section]
\newtheorem{remark}{Remark}[section]
\newtheorem{definition}{Definition}[section]
\newtheorem{corollary}{Corollary}[section]
\newcommand\norm[1]{\lVert#1\rVert}
\newcommand\Tr[1]{\mathrm{Tr}\left(#1\right)}
\newcommand\grad[1]{\mathrm{grad}\left(#1\right)}
\newcommand\eigmax[1]{\lambda_{\mathrm{max}}\left(#1\right)}
\newcommand\eigmin[1]{\lambda_{\mathrm{min}}\left(#1\right)}
\newcommand{\de}{\mathrm{d}}
\newcommand\innprod[2]{\langle #1, #2 \rangle}
\DeclareMathOperator*{\esssup}{ess\,sup}
\title{Small-Disturbance Input-to-State Stability of Perturbed Gradient Flows: Applications to LQR Problem \thanks{This work has been supported in part by the NSF grants CNS-2227153 and ECCS-2210320 (L.C. and Z.P.J.) and AFOSR grant FA9550-21-1-0289 and ONR grant N00014-21-1-2431 (E.D.S.)}}
\author{Leilei Cui\footnote{Department of Electrical and Computer Engineering, New York University, Brooklyn, NY, USA}, Zhong-Ping Jiang\footnotemark[2], and Eduardo D. Sontag\footnote{Department of Electrical and Computer Engineering and Department of BioEngineering, Northeastern University, Boston, MA, USA}}
\begin{document}
\maketitle

\begin{abstract}
This paper studies the effect of perturbations on the gradient flow of a \LC{general nonlinear} programming problem, where the perturbation may arise from inaccurate gradient estimation in the setting of data-driven optimization. Under suitable conditions on the objective function, the perturbed gradient flow is shown to be small-disturbance input-to-state stable (ISS), which implies that, in the presence of a small-enough perturbation, the trajectories of the perturbed gradient flow must eventually enter a small neighborhood of the optimum. This work was motivated by the question of robustness of direct methods for the linear quadratic regulator problem, and specifically the analysis of the effect of perturbations caused by gradient estimation or round-off errors in policy optimization. \LC{We show} small-disturbance ISS for three of the most common optimization algorithms: standard gradient flow, natural gradient flow, and Newton gradient flow. 
\end{abstract}

\section{Introduction}
Gradient-based optimization of loss functions constitutes a key tool in contemporary machine learning. Thus, the theoretical analysis of convergence to the minima of loss functions, in gradient-like iterations and/or in their continuous analogue, gradient-like flows (viewed as the limit of discrete-time gradient descent algorithms with infinitesimally small step size) have attracted considerable attention from both academic and industrial researchers. 
Besides convergence under ideal no-noise situations, a useful optimization algorithm should be capable of finding a near-optimal solution while degrading elegantly in the face of perturbations that might arise from noisy measurements of experimental data, arithmetic rounding errors due to numerical computation, numerically approximating the gradient from data through two-point estimates, discretization error when solving ordinary differential equations, or even early stopping when estimating gradients in a hierarchical learning setup~\cite{ieee_tac_2018_cherukuri_et_al_convexity_saddle_point_dynamics,2021_arxiv_iss_gradient_suttner_dashkovskiy,2020arxiv_bianchin_poveda_dallanese_gradient_iss_switched_systems,Mohammadi2022,Sontag2022}. Especially in the setting of data-driven optimization, the analytical form of the gradient is typically unknown, and consequently the gradient has to be numerically approximated through sampling and experiments, which unavoidably introduces perturbations to the gradient iteration or flow. In computer science foundations of optimization theory, similarly, noisy or error-prone operations such as inexact or stochastic gradient computations have led to the introduction of the concept of ``reproducibility in optimization'' which is concerned conceptually with the same issues~\cite{reproducibility2022}. One could also view adversarial attacks on neural network training as affecting gradient computations in ``backpropagation'' algorithms, and the effect of disturbances in that context has been the subject of recent work~\cite{23cdc_brancodeoliveira_siami_sontag}.
In summary, both the convergence and robustness properties of gradient descent should be theoretically analyzed in the presence of perturbations. Mathematically, gradient flows are more amenable to mathematical analysis than discrete iterations, so they are the main object of study in this paper.

In order to formulate precisely the effect of perturbations on gradient flows, we employ as in~\cite{Sontag2022} the formalism of input-to-state stability (ISS) introduced originally in~\cite{Sontag1989} (see for example~\cite{Sontag2008} for an exposition).

The key to proving ISS for perturbed gradient flows is to verify a \emph{Polyak-\L{}ojasiewicz (PL)} type of condition~\cite{Polyak1963, Lojasiewicz1963, Hamed2016} on the loss function to be optimized, meaning that the gradient of the loss function should not be ``too small'' compared to the loss. Roughly (precise definitions to be given) if we wish to minimize a continuously differentiable (but not necessarily convex) function $\mathcal{J}(z)$ on a domain $\mathcal{Z}$ and if a global minimizer $z^*$ exists, then we would like that, for some continuous function $\kappa: \R_{\geq0} \rightarrow \R_{\geq 0}$ which satisfies that $\kappa(0)=0$ and $\kappa(r)>0$ for all $r>0$ (a ``positive definite'' function), there should hold an estimate of the form
$\norm{\nabla \mathcal{J}(z)} \ge \kappa(\mathcal{J}(z) - \mathcal{J}(z^*))$
valid for all $z$ in $\mathcal{Z}$.
The classical PL condition is often stated in a slightly different ``semi-global'' form, by requiring the existence, for each $r$, of a constant $c_r$ such that 
$\norm{\nabla \mathcal{J}(z)} \ge c_r (\mathcal{J}(z) - \mathcal{J}(z^*))$
for every $z$ in the sublevel set $\{z\st\mathcal{J}(z) \leq r\}$.
For a coercive function $\mathcal{J}$, this is equivalent to the above estimate using positive definite functions.

Sometimes, the PL condition is stated globally, that is, with a constant $c$ which is independent of $r$ (in other words, $\kappa$ can be picked as a linear function), but such a global condition is too strong for many applications, including the one to be pursued here. On the other hand, one could think of stronger forms of the PL condition, weaker than the existence of a linear function but stronger than merely requiring $\kappa$ to be positive definite. One particularly useful strengthening is to ask that $\kappa$ be a function of class ${\cal K}$, that is, that it be a strictly increasing function. For example, one could take $\kappa(r) = \frac{ar}{b+r}$: note that this function saturates, in the sense that it approaches a finite limit as $r\rightarrow+\infty$. This is stronger than asking that $\kappa$ be only positive definite, as illustrated by $\kappa(r) = \frac{ar}{(b+r)^2}$ which is positive definite but is not of class ${\cal K}$.
An even further strengthening would be to ask that $\kappa$ be of class ${\cal K_{\infty}}$, meaning that $\kappa$ does not saturate, $\kappa(r)\rightarrow\infty$ as $r\rightarrow\infty$, as for example when $\kappa$ is linear.
Estimates with $\kappa$ of class ${\cal K_{\infty}}$ lead to ISS estimates for perturbed gradient flows, as discussed in~\cite{Sontag2022}, and a similar proof to that in~\cite{Sontag2022} can be used to show that estimates with $\kappa(r)$ only positive definite lead to the weaker property of ``integral ISS (iISS)'' \cite{MR1629012,angeli_sontag_wang_iISS_TAC00} for perturbed gradient flows.
We may call the intermediate type of PL estimate, in which $\kappa$ is required to be of class ${\cal K}$ (a stronger property than positive definiteness, but not as strong as class ${\cal K}_{\infty}$) a \emph{CJS-PL} (``comparison just saturated'') estimate.
It turns out that CJS-PL estimates are exactly what is required in order to establish ``small-disturbance'' ISS as studied in~\cite{Pang_Jiang_2021,Pang2022}.
To be precise, we will show that when the objective function is coercive (the value of the objective function blows up when the decision variable approaches the boundary of ${\cal Z}$) and the CJS-PL condition holds, the perturbed gradient flow is small-disturbance ISS. This implies that the trajectories of the perturbed gradient flow will eventually enter a small neighborhood of the optimal solution, as long as the perturbation is sufficiently small. In addition, the size of the neighborhood is (in a nonlinear manner) proportional to the magnitude of the perturbation.
In the application that motivated this work, the linear regulator problem (see below), CJS-PL is the correct notion to use, and we believe that this notion might be of more general applicability in optimization problems as well.

We should remark that generalizations of the PL condition, and relations to ISS types of properties, can be found in other recent work. This includes~\cite{2021_poveda_krstic_fixedtime_iss_extremum_seeking}, which studies the
gradient minimization of a function ${\cal J}_q$ on Euclidean space, where the parameter $q$ represents time-varying uncertainty. In that paper, an ISS property is established with respect to the rate of change of the parameter $q$, essentially showing differential ISS (DISS)~\cite{DiffISS_IJRNC03}).  
Extremum-seeking controllers based on gradient flows and an ISS property with respect to disturbances, specifically for an integrator and a kinematic unicycle, are designed and analyzed in~\cite{2021_arxiv_iss_gradient_suttner_dashkovskiy}; in that paper the domain is a closed submanifold of an Euclidean space.
In~\cite{ieee_tac_2018_cherukuri_et_al_convexity_saddle_point_dynamics} one finds results on gradient flows that are ISS with respect to additive errors, but assuming a ``convex-concave'' property for the loss function, and in ~\cite{2020arxiv_bianchin_poveda_dallanese_gradient_iss_switched_systems} the authors solve an output regulation problem for switched linear systems, and show an ISS property for gradient flows with respect to unknown disturbances acting on the plant.

We now turn to the main motivation for this work. Reinforcement learning (RL) is an active research field in which gradient-based optimization plays a pivotal role \cite[Chapter 13]{book_sutton}. In the setting of RL, an agent interacts continuously with an unknown environment, and iteratively optimizes a performance index by collecting data from the environment. By adopting gradient-based optimization methods, various policy optimization (PO) algorithms have been developed, such as actor-critic methods \cite{konda1999actor}, deep deterministic policy gradient \cite{Lillicrap2015}, and trust region policy optimization \cite{Schulman15}. The critical strategy of the policy optimization methods is to parameterize the policy by universal approximations and update the parameters of the policy along the gradient descent direction of the performance index. 

Starting in the early 1960s with the work of Kalman, the linear quadratic regulator (LQR) problem was shown to be theoretically tractable, and has become a widely utilized tool for optimal control and feedback design in engineering applications. 
In the classical approach, the (infinite-horizon) LQR problem relies upon the solution of a Riccati equation. In 1970, Athans and Levine~\cite{1970levine_athans} introduced the idea of a direct gradient descent computation of optimal feedback gains, a procedure which can be interpreted as a form of RL. Thus, the LQR problem offers an ideal benchmark for better understanding policy optimization methods in the RL field, as one can compare solutions to the known optimal solution, and analysis of gradient methods can take advantage of theory developed for LQR. For policy optimization in the LQR problem, the objective function is a cumulative quadratic function of the state and control inputs, the control policy is parameterized as a linear function (feedback) of the state, and the admissible set, consisting of all the stabilizing control gains, is an open subset of an Euclidean space. 
As investigated for example in \cite{bu2020policy,Mohammadi2022,Hu2023Review}, the gradient of the objective function can be computed by using a Lyapunov equation that depends on the system matrices. Nevertheless, if precise system knowledge is unavailable, as in the setting of model-free RL, the gradient has to be numerically approximated through sampling and experiments. For example, by utilizing the approximate dynamic programming technique \cite{books_Bertsekas,book_Powell}, the Lyapunov equation was solved by data-driven methods in \cite{book_Jiang,Book_Lewis2013,tutorial_Jiang}. In \cite{Mohammadi2022,fazel2018global,Li2021}, the gradient is directly calculated by the finite differences method \cite[Section 7.1]{book_Nocedal}, based on the change in function values in response to small perturbations near a given point. For these data-driven methods, a gradient estimation error is inevitable due to noisy data and insufficient samples. Therefore, the robustness analysis of the policy optimization algorithm in the presence of perturbations is critical for efficient learning, and lays the foundations for better understanding RL algorithms.

Our main result will be that, for the LQR problem, the loss function is coercive and satisfies the CJS-PL property, and therefore, by the results in the first part of the paper, we conclude that the perturbed standard gradient flow is small-disturbance ISS. We also show that two variants of gradient flows, natural gradient flows and Newton gradient flows, are small-disturbance ISS. The new contribution is to establish the CJS-PL property for the LQR problem. This considerably extends previous work~\cite{Mohammadi2022,bu2020policy} that only showed a semiglobal estimate (and thus would imply merely iISS). In \cite{Sontag2022}, it was mistakenly stated that the magnitude of the gradient is lower bounded by a $\mathcal{K}_\infty$-function, which is a stronger property. This is incorrect.
Indeed, take a one-dimensional linear system with scalar inputs and assume that all constants in the system and cost function are equal to one. Then the loss function is
${\cal J}(z) = \frac{z^2 + 1}{2(z - 1)}$,
so that its gradient is ${\cal J'}(z) = \frac{z^2 - 2z - 1}{2(z - 1)^2}$. 
The domain of ${\cal J}$ is the open set $(1,\infty)$. We claim that there is no function $\kappa$ of class $\mathcal{K}_\infty$ such that 
$\abs{\mathcal{J}'(z)} \ge \kappa(\mathcal{J}(z) - \mathcal{J}(z^*))$,
where $z^*$ is the global minimizer of $\mathcal{J}$. Indeed, as $z\rightarrow\infty$ we would have that $\mathcal{J}(z)\rightarrow\infty$, so, as $\kappa$ is of class $\mathcal{K}_\infty$, also $\kappa(\mathcal{J}(z) - \mathcal{J}(z^*))\rightarrow\infty$. However, the left-hand side $\abs{\mathcal{J}'(z)}$ is bounded, and in fact converges to $1/2$, showing a contradiction.

To summarize, the contributions of the paper are as follows. First, we provide a Lyapunov-like necessary and sufficient condition for small-disturbance ISS. Second, under assumptions of coercivity and the CJS-PL property, we use the Lyapunov characterization to show that the perturbed gradient flow for a general constrained nonlinear programming problem is small-disturbance ISS. Finally, we show the CJS-PL property for the LQR loss function, which in turn then implies that the standard gradient flow, natural gradient flow, and Newton gradient flow, are all small-disturbance ISS.
The remaining contents of the paper are organized as follows. The notations and preliminaries are introduced in Section 2.  In Section 3, the concept of small-disturbance ISS is reviewed, followed by a necessary and sufficient condition. Section 4 introduces the perturbed gradient flow for a general constrained nonlinear programming problem over an open admissible set, and it is shown that the perturbed gradient flow is small-disturbance ISS under appropriate conditions on the loss function. In Section 5, we study the CSJ-PL property for the LQR problem, and three different kinds of the perturbed gradient flows for LQR are shown to be small-disturbance ISS. Some concluding remarks are given in Section 6.

\section{Notations and Preliminaries}

In this paper, $\mathbb{R}$ ($\mathbb{R}_+$) denotes the set of (nonnegative) real numbers. $\mathbb{P}^n$ denotes the set of $n$-dimensional real symmetric and positive definite matrices. $\eigmin{\cdot}$ and $\eigmax{\cdot}$ denote the minimal and maximal eigenvalues of a real symmetric matrix, respectively. $\Tr{\cdot}$ denotes the trace of a square matrix. $\norm{\cdot}$ denotes the spectral norm of a matrix or Euclidean norm of a vector, and $\norm{\cdot}_F$ denotes the Frobenius norm of a matrix. $\mathcal{L}_\infty^{n}$ ($\mathcal{L}_\infty^{m \times n}$) denotes the set of measurable and locally essentially bounded functions $w: \mathbb{R}_+ \to \Rn$ ($K: \mathbb{R}_+ \to \mathbb{R}^{m \times n}$), endowed with the essential supremum norm  $\norm{w}_\infty = \esssup_{s \in \mathbb{R}_+}\norm{w(s)}$ ($\norm{K}_\infty = \esssup_{s \in \mathbb{R}_+}\norm{K(s)}_F$). $w_t$ denotes the truncation of $w$ at $t$, that is, $w_t(s) = w(s)$ if $s \le t$, and $w_t(s) = 0$ if $s > t$. $I_n$ denotes the $n$-dimensional identity matrix. $\mathrm{Id}$ denotes the identity function. For any $K_1, K_2 \in \mathbb{R}^{m \times n}$ and $Y \in \mathbb{P}^n$, define the inner product $\innprod{K_1}{K_2}_Y = \Tr{K_1^T K_2 Y}$. In addition, to simplify the notation, we denote $\innprod{K_1}{K_2} = \innprod{K_1}{K_2}_{I_n}$. Recall that for any $K \in \mathbb{R}^{m \times n}$, $\norm{K}_F^2 = \innprod{K}{K}$. For any two real symmetric matrices $A$ and $B$, $A \succ B$ \LC{means} that $A-B$ is positive definite, and $A \succeq B$ \LC{means} that $A-B$ is positive semidefinite.

\LC{In the remainder of this section we gather definitions and technical results which we shall use throughout the rest of the paper. A number of these results appear across the literature.}

\begin{definition}[Definitions 2.5 and 24.2 in \cite{book_Hahn}]
A function $\alpha: \mathbb{R}_+ \to \mathbb{R}_+$ is a $\mathcal{K}$-function if it is continuous, strictly increasing, and vanishes at zero. For any $d >0$, a function $\alpha: [0,d) \to \mathbb{R}_+$ is a $\mathcal{K}_{[0,d)}$-function if it is continuous, strictly increasing, and vanishes at zero. A function $\alpha: \mathbb{R}_+ \to \mathbb{R}_+$ is a $\mathcal{K}_\infty$-function if it is a $\mathcal{K}$-function and also satisfies $\alpha(r) \to \infty$ as $r \to \infty$. A function $\beta: \mathbb{R}_+ \times \mathbb{R}_+ \to \mathbb{R}_+$ is a $\mathcal{KL}$-function if for any fixed $t \geq 0$, $\beta(\cdot,t)$ is a $\mathcal{K}$-function, and for any fixed $r \geq 0$, $\beta(r,\cdot)$ is decreasing and $\beta(r,t) \to 0$ as $t \to \infty$. 
\end{definition}

\begin{lemma}[The cyclic property of the trace, equation (16) in \cite{book_Petersen} ]\label{lm:traceCyclic}
    For any $X,Y,Z \in \mathbb{R}^{n \times n}$, $\Tr{XYZ} = \Tr{ZXY} = \Tr{YZX}$.
\end{lemma}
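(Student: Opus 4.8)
The plan is to derive the three–matrix cyclic identity from the elementary two–matrix identity $\Tr{AB}=\Tr{BA}$, which holds whenever the two products are both well defined and square. First I would establish that two–matrix identity directly from the definition of the trace as the sum of diagonal entries: $\Tr{AB}=\sum_{i}(AB)_{ii}=\sum_{i}\sum_{j}A_{ij}B_{ji}$, and since the sums are finite they may be interchanged, giving $\sum_{j}\sum_{i}B_{ji}A_{ij}=\sum_{j}(BA)_{jj}=\Tr{BA}$. (Alternatively, one simply invokes equation~(16) of~\cite{book_Petersen}, as the statement indicates.)

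Next I would apply this identity twice with two different groupings of the product $XYZ$. Setting $A=XY$ and $B=Z$ yields $\Tr{XYZ}=\Tr{(XY)Z}=\Tr{Z(XY)}=\Tr{ZXY}$. Setting instead $A=X$ and $B=YZ$ yields $\Tr{XYZ}=\Tr{X(YZ)}=\Tr{(YZ)X}=\Tr{YZX}$. Combining the two chains gives $\Tr{XYZ}=\Tr{ZXY}=\Tr{YZX}$, which is the claim.

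There is no genuine obstacle here; the only thing to check is that all the intermediate products are square matrices of the same size, which is immediate since $X,Y,Z\in\R^{n\times n}$, so every product appearing above is $n\times n$ and every trace is defined.
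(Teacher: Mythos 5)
Your proof is correct and complete. The paper itself offers no proof of this lemma, merely citing it as equation~(16) of the referenced matrix cookbook, so there is no paper argument to compare against; your derivation is the standard one. You first establish $\Tr{AB}=\Tr{BA}$ by expanding the trace as a double finite sum over entries and exchanging the order of summation, and you then apply it twice with the two associative groupings $(XY)Z$ and $X(YZ)$ to obtain both cyclic rotations. The final remark about all intermediate products being $n\times n$ is the right sanity check and closes the argument.
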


\begin{lemma}[Trace inequality \cite{Wang1986}]\label{lm:traceIneq}
    Let $S\in \mathbb{R}^{n \times n}$ be real symmetric and $P \in  \mathbb{R}^{n \times n}$ be real symmetric and positive semidefinite. Then,
    \begin{align*}
        \eigmin{S} \Tr{P} \le \Tr{SP} \le \eigmax{S} \Tr{P}.
    \end{align*}
\end{lemma}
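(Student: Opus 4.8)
The plan is to reduce the statement to the Rayleigh-quotient bounds for the symmetric matrix $S$ by using a spectral decomposition of $P$. First I would diagonalize $P$: since $P$ is real symmetric and positive semidefinite, there exist an orthonormal basis $v_1,\dots,v_n$ of $\mathbb{R}^n$ and scalars $\mu_1,\dots,\mu_n \ge 0$ with $P = \sum_{i=1}^n \mu_i v_i v_i^T$, and in particular $\Tr{P} = \sum_{i=1}^n \mu_i$. By linearity of the trace together with the cyclic property (Lemma~\ref{lm:traceCyclic}), $\Tr{SP} = \sum_{i=1}^n \mu_i \Tr{S v_i v_i^T} = \sum_{i=1}^n \mu_i\, v_i^T S v_i$.

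Next I would invoke the variational characterization of the extreme eigenvalues of the symmetric matrix $S$, namely $\eigmin{S} \le v^T S v \le \eigmax{S}$ for every unit vector $v$. Since each coefficient $\mu_i$ is nonnegative, multiplying these inequalities by $\mu_i$ and summing over $i$ is order-preserving and gives $\eigmin{S}\sum_{i=1}^n \mu_i \le \sum_{i=1}^n \mu_i\, v_i^T S v_i \le \eigmax{S}\sum_{i=1}^n \mu_i$; substituting the two trace identities from the previous step yields exactly $\eigmin{S}\Tr{P} \le \Tr{SP} \le \eigmax{S}\Tr{P}$.

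An essentially equivalent route avoids the explicit eigendecomposition of $P$: rewrite the lower bound as $\Tr{SP} - \eigmin{S}\Tr{P} = \Tr{(S - \eigmin{S} I_n) P}$, note that $S - \eigmin{S} I_n \succeq 0$, and conclude from the fact that the trace of a product of two positive semidefinite matrices is nonnegative (for instance because $\Tr{AB} = \Tr{A^{1/2} B A^{1/2}}$ with $A^{1/2} B A^{1/2} \succeq 0$). The upper bound is symmetric, applying the same observation to $\eigmax{S} I_n - S \succeq 0$.

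The argument is elementary and I do not anticipate a genuine obstacle; the only point deserving care is that the positive semidefiniteness of $P$ (equivalently $\mu_i \ge 0$) is precisely what makes the scaling-and-summing step order-preserving, so this hypothesis must be used and cannot be dropped.
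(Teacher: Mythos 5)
Your proof is correct. Note, however, that the paper does not actually prove this lemma — it is stated in the preliminaries section and simply attributed to Wang (1986) by citation, so there is no in-paper argument to compare against. Both of the routes you sketch are standard and sound: the first reduces the claim to the Rayleigh-quotient bounds $\eigmin{S} \le v^T S v \le \eigmax{S}$ via a spectral decomposition $P = \sum_i \mu_i v_i v_i^T$ with $\mu_i \ge 0$, and the second observes that $\Tr{(S - \eigmin{S} I_n)P} \ge 0$ (and likewise for $\eigmax{S} I_n - S$) because the trace of a product of two positive semidefinite matrices is nonnegative. You also correctly identify that positive semidefiniteness of $P$ is the hypothesis that makes the weighted sum order-preserving; without it the conclusion fails.
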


\begin{lemma}[Cauchy-Schwarz inequality]\label{lm:CSInequality}
    For any $K_1,K_2 \in \mathbb{R}^{m\times n}$, $R \in \mathbb{P}^{m}$, and $Y \in \mathbb{P}^n$, we have
    \begin{align}\label{eq:CSInequality}
        \innprod{K_1}{RK_2}_Y \leq \sqrt{ \innprod{K_1}{RK_1}_Y}\sqrt{ \innprod{K_2}{RK_2}_Y}. 
    \end{align}
\end{lemma}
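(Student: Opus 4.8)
The plan is to recognize the weighted bilinear map $(A,B)\mapsto \innprod{A}{RB}_Y = \Tr{A^T R B Y}$ as a genuine inner product on the vector space $\mathbb{R}^{m\times n}$, and then to invoke the classical Cauchy--Schwarz inequality for real inner-product spaces. Write $\langle\langle A,B\rangle\rangle := \Tr{A^T R B Y}$. Bilinearity is immediate, so the only points requiring verification are symmetry and positive-definiteness.

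For symmetry, I would first transpose inside the trace: $\Tr{A^T R B Y} = \Tr{(A^T R B Y)^T} = \Tr{Y^T B^T R^T A} = \Tr{Y B^T R A}$, using that $R$ and $Y$ are symmetric, and then apply the cyclic property of Lemma~\ref{lm:traceCyclic} to move $Y$ to the end, obtaining $\Tr{B^T R A Y} = \langle\langle B,A\rangle\rangle$. For positive-definiteness, I would use that $R\in\mathbb{P}^m$ and $Y\in\mathbb{P}^n$ admit symmetric positive-definite (hence invertible) square roots $R^{1/2}$ and $Y^{1/2}$; then, again by the cyclic property, $\langle\langle A,A\rangle\rangle = \Tr{A^T R A Y} = \Tr{(R^{1/2} A Y^{1/2})^T (R^{1/2} A Y^{1/2})} = \norm{R^{1/2} A Y^{1/2}}_F^2 \ge 0$, with equality only if $R^{1/2} A Y^{1/2} = 0$, which forces $A=0$ since the square roots are invertible. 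Hence $\langle\langle\cdot,\cdot\rangle\rangle$ is an inner product, and the classical Cauchy--Schwarz inequality gives $\langle\langle K_1,K_2\rangle\rangle \le \abs{\langle\langle K_1,K_2\rangle\rangle} \le \sqrt{\langle\langle K_1,K_1\rangle\rangle}\,\sqrt{\langle\langle K_2,K_2\rangle\rangle}$, which is exactly \eqref{eq:CSInequality}.

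If one prefers to avoid invoking the abstract inequality, an equivalent route is the classical discriminant argument: for every $t\in\mathbb{R}$ the scalar $\langle\langle K_1 - tK_2, K_1 - tK_2\rangle\rangle = \langle\langle K_1,K_1\rangle\rangle - 2t\langle\langle K_1,K_2\rangle\rangle + t^2\langle\langle K_2,K_2\rangle\rangle$ is nonnegative, so this nonnegative quadratic in $t$ has nonpositive discriminant, which yields $\langle\langle K_1,K_2\rangle\rangle^2 \le \langle\langle K_1,K_1\rangle\rangle\langle\langle K_2,K_2\rangle\rangle$ (the degenerate case $\langle\langle K_2,K_2\rangle\rangle = 0$, i.e. $K_2 = 0$, being trivial), and taking square roots finishes the argument.

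There is no genuine obstacle here: the statement is a routine specialization of Cauchy--Schwarz, and the only mild care needed is in checking that the weighting by $R$ and $Y$ preserves symmetry (handled by the cyclic trace identity together with symmetry of $R$ and $Y$) and strict positivity (handled by passing to invertible square roots). The positive-definiteness of $R$ and $Y$ is used only to make the form nondegenerate; positive-semidefiniteness of $R$ and $Y$ would already suffice for the inequality itself via the discriminant computation.
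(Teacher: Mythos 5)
Your proof is correct, and it takes a genuinely different (though closely related) route from the paper's. The paper's proof is an in-place projection argument: after disposing of the case $K_2=0$, it sets $K_3 = K_1 - \frac{\innprod{K_1}{RK_2}_Y}{\innprod{K_2}{RK_2}_Y}K_2$, observes that $\innprod{K_3}{RK_2}_Y=0$, expands $\innprod{K_1}{RK_1}_Y = \innprod{K_3}{RK_3}_Y + \frac{\innprod{K_1}{RK_2}_Y^2}{\innprod{K_2}{RK_2}_Y}$, and discards the term $\innprod{K_3}{RK_3}_Y\ge 0$; this is the Gram--Schmidt-style derivation of Cauchy--Schwarz, carried out directly on the concrete form. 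Your primary route instead verifies that $(A,B)\mapsto \Tr{A^T R B Y}$ satisfies the inner-product axioms --- symmetry via transposing inside the trace together with the cyclic identity of Lemma~\ref{lm:traceCyclic}, and positive definiteness via the identity $\Tr{A^T R A Y}=\norm{\sqrt{R}\,A\,\sqrt{Y}}_F^2$ --- and then cites the abstract Cauchy--Schwarz theorem; your alternative route is the standard discriminant argument, which is essentially the same computation the paper performs but parametrized by $t$. Both of your versions are correct, and your square-root computation has the merit of making explicit the positive (semi-)definiteness that the paper's proof uses tacitly when it drops $\innprod{K_3}{RK_3}_Y\ge 0$ (and also, implicitly, the symmetry of the form when expanding the cross terms). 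Your final remark that positive semidefiniteness of $R$ and $Y$ already suffices for the inequality, with positive definiteness needed only for nondegeneracy, is accurate and slightly sharper than the hypotheses of the lemma as stated.
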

\begin{proof}
    \LC{Inequality \eqref{eq:CSInequality} is obviously true if $K_2 = 0$. If $K_2 \neq 0$, define} $K_3$ as
    \begin{align}\label{eq:orthoDecom}
        K_3 = K_1 - \frac{\innprod{K_1}{RK_2}_Y}{\innprod{K_2}{RK_2}_Y}K_2.
    \end{align}
    It is clear that $\innprod{K_3}{RK_2}_Y=0$. Therefore, by plugging \eqref{eq:orthoDecom} into $\innprod{K_1}{RK_1}_Y$, we can obtain
    \begin{align}\label{eq:CSInequality1}
        \innprod{K_1}{RK_1}_Y = \innprod{K_3}{RK_3}_Y + \frac{\innprod{K_1}{RK_2}_Y^2}{\innprod{K_2}{RK_2}_Y^2} \innprod{K_2}{RK_2}_Y\ge \frac{\innprod{K_1}{RK_2}_Y^2}{\innprod{K_2}{RK_2}_Y}.
    \end{align}
    Hence, \eqref{eq:CSInequality} readily follows from \eqref{eq:CSInequality1}.
\end{proof}

\begin{lemma}\label{lm:homomorphism}
   The map $h(v) = \frac{1}{1+\norm{v}}v: \mathbb{R}^m \to \mathcal{W}:=\{w\in \mathbb{R}^m| \norm{w} < 1\}$ is a homeomorphism.
\end{lemma}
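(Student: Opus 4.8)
The plan is to exhibit an explicit continuous inverse and then invoke the standard fact that a continuous bijection with continuous inverse is a homeomorphism. First I would check that $h$ is well-defined, continuous, and maps into $\mathcal{W}$: since $1+\norm{v}\ge 1>0$ for every $v$, the denominator never vanishes, so $h$ is continuous on $\R^m$, and $\norm{h(v)} = \frac{\norm{v}}{1+\norm{v}} < 1$, so indeed $h(v)\in\mathcal{W}$.

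Next I would guess the inverse by solving $w = \frac{v}{1+\norm{v}}$ for $v$. Taking norms gives $\norm{w} = \frac{\norm{v}}{1+\norm{v}}$, which rearranges to $\norm{v} = \frac{\norm{w}}{1-\norm{w}}$ — this is the one place where the constraint $\norm{w}<1$ is used, to guarantee the denominator is positive — and hence $1+\norm{v} = \frac{1}{1-\norm{w}}$ and $v = (1+\norm{v})w = \frac{w}{1-\norm{w}}$. This motivates defining $g:\mathcal{W}\to\R^m$ by $g(w) = \frac{w}{1-\norm{w}}$, which is continuous on $\mathcal{W}$ because $1-\norm{w}>0$ there.

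Then I would verify directly that $g$ and $h$ are mutually inverse. For $g\circ h$: writing $w=h(v)$, one has $1-\norm{w} = 1 - \frac{\norm{v}}{1+\norm{v}} = \frac{1}{1+\norm{v}}$, so $g(h(v)) = \frac{h(v)}{1-\norm{h(v)}} = (1+\norm{v})\cdot\frac{v}{1+\norm{v}} = v$. For $h\circ g$: writing $v=g(w)$, one has $\norm{v} = \frac{\norm{w}}{1-\norm{w}}$, so $1+\norm{v} = \frac{1}{1-\norm{w}}$, and thus $h(g(w)) = \frac{g(w)}{1+\norm{g(w)}} = (1-\norm{w})\cdot\frac{w}{1-\norm{w}} = w$. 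Therefore $h$ is a bijection from $\R^m$ onto $\mathcal{W}$ whose inverse $g$ is continuous, so $h$ is a homeomorphism.

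The computations here are entirely routine, and no deeper idea is required; the only thing to be careful about is the bookkeeping of $\norm{h(v)}$ and $\norm{g(w)}$ and noting precisely where $\norm{w}<1$ enters — it is needed both for $g$ to be well-defined and continuous and for the norm rearrangement above to be valid.
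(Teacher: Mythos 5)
Your proof is correct and follows the same route as the paper: define $g(w) = \frac{w}{1-\norm{w}}$ on $\mathcal{W}$, check $g\circ h = \mathrm{Id}$ and $h\circ g = \mathrm{Id}$, and note both maps are continuous. You simply spell out the norm bookkeeping that the paper compresses into a "clearly."
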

\begin{proof}
    For any $w \in \mathcal{W}$, let $g(w) = \frac{1}{1-\norm{w}}w$. Clearly, for any $v \in \mathbb{R}^m$, $g(h(v)) = v$ and for any $w \in \mathcal{W}$, $h(g(w)) = w$. Hence, $g$ is the inverse function of $h$, i.e. $g = h^{-1}$. Since both $h$ and $g$ are continuous, $h$ is a homeomorphism.    
\end{proof}

\begin{lemma}\label{lm:radiallyUnbounded}
    Suppose $\omega_1, \, \omega_2: \mathbb{R}^n \to \mathbb{R}$ are continuous, positive definite with respect to $\chi^*$, and radially unbounded. Then, there exist $\mathcal{K}_\infty$-functions $\rho_1$ and $\rho_2$ such that
    \begin{align*}
        \rho_1(\omega_2(\chi)) \le \omega_1(\chi) \le \rho_2(\omega_2(\chi)), \quad \forall \chi \in \mathbb{R}^n.
    \end{align*}
\end{lemma}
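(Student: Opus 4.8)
The plan is to separate the two functions: I will show that each of $\omega_1,\omega_2$ can be squeezed, above and below, between $\mathcal{K}_\infty$-functions of the distance $\norm{\chi-\chi^*}$, and then eliminate that distance by composition. Concretely, suppose we have $\mathcal{K}_\infty$-functions $\underline{\sigma}_i,\overline{\sigma}_i$ ($i=1,2$) with $\underline{\sigma}_i(\norm{\chi-\chi^*})\le\omega_i(\chi)\le\overline{\sigma}_i(\norm{\chi-\chi^*})$ for all $\chi$. From the lower bound on $\omega_2$ and invertibility of $\underline{\sigma}_2$ we get $\norm{\chi-\chi^*}\le\underline{\sigma}_2^{-1}(\omega_2(\chi))$, so $\omega_1(\chi)\le\overline{\sigma}_1(\norm{\chi-\chi^*})\le(\overline{\sigma}_1\circ\underline{\sigma}_2^{-1})(\omega_2(\chi))$; and from the upper bound on $\omega_2$ we get $\norm{\chi-\chi^*}\ge\overline{\sigma}_2^{-1}(\omega_2(\chi))$, so $\omega_1(\chi)\ge\underline{\sigma}_1(\norm{\chi-\chi^*})\ge(\underline{\sigma}_1\circ\overline{\sigma}_2^{-1})(\omega_2(\chi))$. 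Since inverses and compositions of $\mathcal{K}_\infty$-functions are again $\mathcal{K}_\infty$, taking $\rho_1=\underline{\sigma}_1\circ\overline{\sigma}_2^{-1}$ and $\rho_2=\overline{\sigma}_1\circ\underline{\sigma}_2^{-1}$ proves the lemma.

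It therefore remains to establish the (standard) fact that a continuous function $\omega:\mathbb{R}^n\to\mathbb{R}$ that is positive definite with respect to $\chi^*$ and radially unbounded is sandwiched between $\mathcal{K}_\infty$-functions of $\norm{\chi-\chi^*}$. I would start from the natural envelopes
\[
\overline{\alpha}_0(r)=\max_{\norm{\chi-\chi^*}\le r}\omega(\chi),\qquad \underline{\alpha}_0(r)=\inf_{\norm{\chi-\chi^*}\ge r}\omega(\chi).
\]
By compactness and continuity the maximum exists and is finite; $\overline{\alpha}_0$ is non-decreasing, $\overline{\alpha}_0(0)=0$ with $\overline{\alpha}_0(r)\to0$ as $r\to0^+$ by continuity of $\omega$ at $\chi^*$, and trivially $\overline{\alpha}_0(\norm{\chi-\chi^*})\ge\omega(\chi)$. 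Dually, radial unboundedness ensures the infimum defining $\underline{\alpha}_0(r)$ is attained (outside a large ball $\omega$ exceeds any prescribed level), so $\underline{\alpha}_0(r)>0$ for $r>0$ since $\omega>0$ off $\chi^*$; moreover $\underline{\alpha}_0$ is non-decreasing with $\underline{\alpha}_0(r)\to0$ as $r\to0^+$ and $\underline{\alpha}_0(r)\to\infty$ as $r\to\infty$, and $\underline{\alpha}_0(\norm{\chi-\chi^*})\le\omega(\chi)$.

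The one genuinely delicate point — and the step I expect to need the most care — is that $\overline{\alpha}_0$ and $\underline{\alpha}_0$ are only non-decreasing, not necessarily continuous or strictly increasing, so they are not yet of class $\mathcal{K}_\infty$. I would repair this by averaging: take $\overline{\sigma}(r)=r+\int_1^2\overline{\alpha}_0(rt)\,\de t$, which dominates $\overline{\alpha}_0(r)$ (the average of a non-decreasing function over $[r,2r]$ is at least its value at $r$), is continuous, is strictly increasing (the integral term is non-decreasing in $r$ and the extra $r$ makes it strict), vanishes at $0$, and is unbounded, hence $\overline{\sigma}\in\mathcal{K}_\infty$; and take $\underline{\sigma}(r)=\frac{r}{1+r}\cdot\frac1r\int_0^r\underline{\alpha}_0(s)\,\de s$ (with $\underline{\sigma}(0)=0$), where the average $\frac1r\int_0^r\underline{\alpha}_0$ is continuous, vanishes at $0$, is positive for $r>0$, lies below $\underline{\alpha}_0(r)$, and tends to $\infty$; multiplying by the strictly increasing bounded factor $\frac{r}{1+r}$ keeps it below $\underline{\alpha}_0(r)$ and still unbounded while forcing strict monotonicity, so $\underline{\sigma}\in\mathcal{K}_\infty$. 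This produces the required $\underline{\sigma},\overline{\sigma}$ for each of $\omega_1,\omega_2$, and the composition argument of the first paragraph then completes the proof. (Alternatively, the sandwich step may simply be quoted from standard references on Lyapunov functions.)
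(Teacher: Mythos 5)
Your proof is correct and self-contained, whereas the paper dispenses with this lemma in one line by quoting~\cite[Proposition~2.6]{Sontag2022} (applied with the open subset equal to $\mathbb{R}^n$ and the compact set equal to $\{\chi^*\}$). The two routes are not really in conflict: the external proposition is exactly the ``sandwich by $\mathcal{K}_\infty$-functions of the distance, then compose'' fact, and you have essentially reproved it from scratch. What your write-up adds is an explicit construction of the envelopes $\overline{\sigma},\underline{\sigma}$. The ingredients are standard and all the delicate points are handled correctly: the infimum defining $\underline{\alpha}_0(r)$ is attained because radial unboundedness confines the competition to a compact annulus, so $\underline{\alpha}_0(r)>0$ for $r>0$; the averaged functions $\overline{\sigma}(r)=r+\int_1^2\overline{\alpha}_0(rt)\,\de t$ and $\underline{\sigma}(r)=\frac{1}{1+r}\int_0^r\underline{\alpha}_0(s)\,\de s$ are continuous and strictly increasing (for $\underline{\sigma}$ the inequality $\int_0^r\underline{\alpha}_0\le r\,\underline{\alpha}_0(r)$ makes the $\frac{1}{1+r}$ factor do exactly the needed work); both vanish at $0$ and diverge at $\infty$; and $\overline{\sigma}\ge\overline{\alpha}_0$, $\underline{\sigma}\le\underline{\alpha}_0$ hold by monotonicity of the envelopes. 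Composing as $\rho_1=\underline{\sigma}_1\circ\overline{\sigma}_2^{-1}$, $\rho_2=\overline{\sigma}_1\circ\underline{\sigma}_2^{-1}$ then gives the claim. In short, where the paper saves space by citation, you supply the proof of the cited fact --- a more elementary and self-contained route, at the cost of length and of being specialized to the $\mathcal{S}=\mathbb{R}^n$ case (which is all this lemma needs), whereas the cited proposition also covers general open $\mathcal{S}$ with a compact set in place of $\{\chi^*\}$.
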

\begin{proof}
    The proof follows from \cite[Proposition 2.6]{Sontag2022} by considering the open subset as $\mathbb{R}^n$ and the compact set as $\{\chi^*\}$.
\end{proof}

\begin{lemma}[Weak triangle inequality in \cite{Jiang1994}]\label{lm:weakTriangle}
    For any $\mathcal{K}$-function $\alpha$, any $\mathcal{K}_\infty$-function $\rho$, and any nonnegative real numbers $a$ and $b$, we have
    \begin{align*}
        \alpha(a+b) \le \alpha \circ (\mathrm{Id} + \rho)(a) + \alpha \circ (\mathrm{Id} + \rho^{-1}) (b).
    \end{align*}
\end{lemma}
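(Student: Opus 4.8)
The plan is to establish the bound by a simple dichotomy on whether $b$ is small or large relative to $a$, measured through the gauge $\rho$. First I would record the elementary facts about $\rho$ that the argument needs: since $\rho$ is of class $\mathcal{K}_\infty$, it is a continuous, strictly increasing bijection of $\mathbb{R}_+$ onto itself, so $\rho^{-1}$ is defined on all of $\mathbb{R}_+$ and is again of class $\mathcal{K}_\infty$; moreover both $\mathrm{Id}+\rho$ and $\mathrm{Id}+\rho^{-1}$ are strictly increasing, so composing either with the $\mathcal{K}$-function $\alpha$ preserves order. I would also note that for nonnegative reals $x,y$ one has $\alpha(\max\{x,y\}) = \max\{\alpha(x),\alpha(y)\} \le \alpha(x) + \alpha(y)$, because $\alpha$ is nonnegative and monotone.

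The core step is the pointwise claim that, for all $a,b \ge 0$,
\[
a + b \;\le\; \max\bigl\{(\mathrm{Id}+\rho)(a),\ (\mathrm{Id}+\rho^{-1})(b)\bigr\}.
\]
To prove it I would split into two exhaustive cases. If $b \le \rho(a)$, then $a + b \le a + \rho(a) = (\mathrm{Id}+\rho)(a)$. Otherwise $b > \rho(a)$, and applying the strictly increasing map $\rho^{-1}$ gives $\rho^{-1}(b) > a$, hence $a + b < \rho^{-1}(b) + b = (\mathrm{Id}+\rho^{-1})(b)$. In either case $a+b$ is dominated by one of the two quantities on the right, which proves the claim.

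Finally I would apply the monotone function $\alpha$ to both sides of the pointwise claim and use the max/sum inequality recorded above:
\begin{align*}
\alpha(a+b) &\le \alpha\Bigl(\max\bigl\{(\mathrm{Id}+\rho)(a),(\mathrm{Id}+\rho^{-1})(b)\bigr\}\Bigr) \\
&= \max\bigl\{\alpha\circ(\mathrm{Id}+\rho)(a),\ \alpha\circ(\mathrm{Id}+\rho^{-1})(b)\bigr\} \\
&\le \alpha\circ(\mathrm{Id}+\rho)(a) + \alpha\circ(\mathrm{Id}+\rho^{-1})(b),
\end{align*}
which is exactly the asserted inequality. I do not expect any genuine obstacle here: the only points deserving a moment's attention are the well-definedness of $\rho^{-1}$ on all of $\mathbb{R}_+$ (this is where the $\mathcal{K}_\infty$, rather than merely $\mathcal{K}$, hypothesis on $\rho$ is used) and the observation that one of the two nonnegative terms on the right-hand side may always be dropped, so it suffices to bound $\alpha(a+b)$ by either term alone. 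Degenerate cases such as $a=0$ or $b=0$ are covered automatically, since $\alpha(0)=\rho(0)=\rho^{-1}(0)=0$.
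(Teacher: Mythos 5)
Your proof is correct and is the standard argument (the one that appears in the cited reference of Jiang, Teel, and Praly); the paper itself does not reproduce a proof, only cites it. The case split on whether $b \le \rho(a)$, the resulting pointwise bound $a+b \le \max\{(\mathrm{Id}+\rho)(a), (\mathrm{Id}+\rho^{-1})(b)\}$, and the monotone application of $\alpha$ followed by bounding the max by the sum are exactly the canonical steps, and your remarks about where $\mathcal{K}_\infty$ (as opposed to $\mathcal{K}$) is genuinely needed for $\rho$ are accurate.
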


\begin{lemma}[Theorem 18 in \cite{book_sontag}]\label{lm:LyaEquaIntegral}
    If $A \in \mathbb{R}^{n \times n}$ is Hurwitz, then the Lyapunov equation 
    \begin{align*}
        A^T P + PA + Q = 0
    \end{align*}
    has a unique solution for any $Q \in \mathbb{R}^{n \times n}$, and the solution can be expressed as
    \begin{align*}
        P = \int_{0}^\infty e^{A^T t} Q e^{At} \mathrm{d}t.
    \end{align*}
\end{lemma}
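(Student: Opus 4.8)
The plan is to verify the three assertions in turn: that the integral converges to a well-defined matrix, that this matrix solves the Lyapunov equation, and that no other solution exists.

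First I would establish convergence of the integral. Since $A$ is Hurwitz, every eigenvalue of $A$ has strictly negative real part, so there exist constants $M \ge 1$ and $\lambda > 0$ such that $\norm{e^{At}} \le M e^{-\lambda t}$ for all $t \ge 0$ (a standard consequence of the Jordan decomposition of $A$). Since $\norm{e^{A^T t}} = \norm{(e^{At})^T} = \norm{e^{At}}$, this gives $\norm{e^{A^T t} Q e^{At}} \le M^2 \norm{Q} e^{-2\lambda t}$, so the matrix-valued integrand is absolutely integrable on $[0,\infty)$, and hence $P := \int_0^\infty e^{A^T t} Q e^{At}\,\de t$ is a well-defined element of $\R^{n\times n}$.

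Second, I would show that this $P$ satisfies $A^T P + P A + Q = 0$. The key identity is $\frac{\de}{\de t}\bigl(e^{A^T t} Q e^{At}\bigr) = A^T e^{A^T t} Q e^{At} + e^{A^T t} Q e^{At} A$, obtained by the product rule together with $\frac{\de}{\de t} e^{At} = A e^{At} = e^{At} A$. Integrating this identity over $[0,\infty)$ and pulling the constant matrices $A$ and $A^T$ outside the integral (legitimate because the integrand is integrable) yields $A^T P + P A = \bigl[e^{A^T t} Q e^{At}\bigr]_0^\infty$. The boundary term at $t=\infty$ vanishes by the bound above, and the term at $t=0$ is $Q$, so $A^T P + P A = -Q$, which is exactly the Lyapunov equation.

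Third, for uniqueness, I would observe that $\mathcal{L}(P) := A^T P + P A$ is a linear map on the finite-dimensional space $\R^{n\times n}$, so it suffices to show its kernel is trivial. Suppose $A^T P + P A = 0$. Then $\frac{\de}{\de t}\bigl(e^{A^T t} P e^{At}\bigr) = e^{A^T t}\bigl(A^T P + P A\bigr)e^{At} = 0$, so $e^{A^T t} P e^{At}$ is constant in $t$; evaluating at $t=0$ gives the constant value $P$, while letting $t \to \infty$ and using $\norm{e^{At}}\to 0$ gives $P = \lim_{t\to\infty} e^{A^T t} P e^{At} = 0$. Hence $\mathcal{L}$ is injective, so the solution is unique and must coincide with the $P$ constructed above. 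I expect no genuine obstacle in this argument; the only points needing a little care are the exponential bound $\norm{e^{At}}\le M e^{-\lambda t}$ (which is what makes both the convergence and the vanishing of the boundary term work) and the interchange of the constant matrices $A,A^T$ with the integral, both of which are routine.
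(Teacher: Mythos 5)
Your proof is correct and is essentially the standard argument found in the cited reference (Sontag's \emph{Mathematical Control Theory}, Theorem 18); the paper itself does not reproduce a proof but simply cites that source. All three steps — absolute convergence via the exponential bound $\norm{e^{At}}\le M e^{-\lambda t}$, integrating $\frac{\de}{\de t}\bigl(e^{A^T t}Qe^{At}\bigr)$ to verify the equation, and showing injectivity of $P\mapsto A^TP+PA$ by the same differentiation trick — match the textbook treatment.
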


\section{Small-Disturbance Input-to-State Stability}
Let $\mathcal{S}$ denote an open subset of $\mathbb{R}^n$, which will be called the admissible set of states. Consider the following nonlinear system
\begin{align}\label{eq:nonLinearSys}
    \dot{\chi}(t) = f(\chi(t), w(t)),
\end{align}
where $f: \mathcal{S} \times \mathbb{R}^m \to \mathbb{R}^n$ is \LC{a smooth function}, and inputs $w: \mathbb{R}_+ \to \mathbb{R}^m$ are measurable and locally essentially bounded functions. Assume the unforced system has an equilibrium $\chi^*$, i.e. $f(\chi^*,0)=0$.

\begin{definition}[Definition 2.1 in \cite{Sontag2022}]\label{def:sizeFunc}
    A function $\mathcal{V}: \mathcal{S} \to \mathbb{R}_+$ is a size function for $(\mathcal{S},\chi^*)$ if $\mathcal{V}$ is 
    \begin{enumerate}
        \item continuous;
        \item positive definite with respect to $\chi^*$, i.e. $\mathcal{V}(\chi^*)=0$ and $\mathcal{V}(\chi)>0$ for all $\chi \neq \chi^*$,  $\chi \in \mathcal{S}$;
        \item coercive, i.e. for any sequence $\{\chi_k\}_{k=0}^\infty$, $\chi_k \to \partial \mathcal{S}$ or $\norm{\chi_k} \to \infty$, it holds that $\mathcal{V}(\chi_k) \to \infty$, as $k \to \infty$.
    \end{enumerate}
\end{definition}

\begin{definition}\cite{Pang_Jiang_2021,Pang2022}\label{def:smallISS}
System \eqref{eq:nonLinearSys} is small-disturbance input-to-state stable (ISS) if there exist a size function $\mathcal{V}$, a constant $d > 0$ (possibly $\infty$), a $\mathcal{KL}$-function $\beta$, and a $\mathcal{K}_{[0,d)}$-function $\gamma$, such that for all inputs $w$ essentially bounded by $d$ (i.e. $\norm{w}_\infty<d$), and all initial states $\chi(0) \in \mathcal{S}$, $\chi(t) $ remains in $\mathcal{S}$ and satisfies
\begin{align}\label{eq:ISS}
     \mathcal{V}(\chi(t)) \leq \beta(\mathcal{V}(\chi(0)),t) + \gamma(\norm{w}_\infty), \quad \forall t \ge 0.
\end{align}
\end{definition}

As shown in \cite{sontag1995characterizations}, by causality, the same definition would result if one would replace $\norm{w}_\infty$ by $\norm{w_t}_\infty$  in \eqref{eq:ISS}.

\begin{definition}\label{def:smallISSLyap}
A continuously differentiable function $\mathcal{V}:\mathcal{S} \to \mathbb{R}$ is a small-disturbance ISS-Lyapunov function for system \eqref{eq:nonLinearSys} if 
\begin{enumerate}
    \item $\mathcal{V}$ is a size function for $(\mathcal{S},\chi^*)$;  
    \item there exist a $\mathcal{K}$-function $\alpha_1$ and a continuous and positive definite function $\alpha_2$ such that if $\norm{\mu} \leq \alpha_1(\mathcal{V}(\chi))$,
    \begin{align}\label{eq:dissipativeLya}
        \nabla\mathcal{V}(\chi)^Tf(\chi,\mu) \leq - \alpha_2(\mathcal{V}(\chi)).
    \end{align}
\end{enumerate}
\end{definition}

\begin{theorem}\label{thm:smallISSSufficient}
System \eqref{eq:nonLinearSys} is small-disturbance ISS if and only if it admits a small-disturbance ISS-Lyapunov function.
\end{theorem}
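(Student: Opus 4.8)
The plan is to establish the two implications separately: the ``if'' direction by a direct Lyapunov argument, and the ``only if'' direction by converting the bounded-disturbance problem into an ordinary (unrestricted-input) one through the homeomorphism $h$ of Lemma~\ref{lm:homomorphism}, and then invoking a converse ISS-Lyapunov theorem in the open-set/size-function setting.

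\emph{Sufficiency.} Let $\mathcal{V}$ be a small-disturbance ISS-Lyapunov function, with margin $\alpha_1\in\mathcal{K}$ and decay rate $\alpha_2$ as in Definition~\ref{def:smallISSLyap}. Set $d:=\lim_{r\to\infty}\alpha_1(r)\in(0,\infty]$ and $\gamma:=\alpha_1^{-1}:[0,d)\to\mathbb{R}_+$, a $\mathcal{K}_{[0,d)}$-function. Fix $w$ with $\norm{w}_\infty<d$ and put $\delta:=\gamma(\norm{w}_\infty)$, so $\alpha_1(\delta)=\norm{w}_\infty$. I would first argue completeness and invariance of $\mathcal{S}$: whenever $\mathcal{V}(\chi(t))>\delta$ one has $\alpha_1(\mathcal{V}(\chi(t)))>\norm{w}_\infty\ge\norm{w(t)}$, so \eqref{eq:dissipativeLya} applies and $t\mapsto\mathcal{V}(\chi(t))$ is strictly decreasing there; a standard forward-invariance argument then yields $\mathcal{V}(\chi(t))\le\max\{\mathcal{V}(\chi(0)),\delta\}$ throughout the maximal interval of existence, and since $\mathcal{V}$ is a size function this sublevel set is, by coercivity, a compact subset of $\mathcal{S}$, so there is no finite escape time and $\chi(t)\in\mathcal{S}$ for all $t\ge0$. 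Writing $v(t):=\mathcal{V}(\chi(t))$, on the (possibly empty) initial interval where $v(t)>\delta$ we have $\dot v\le-\alpha_2(v)$ a.e., and once $v$ reaches $\delta$ it stays below $\delta$ by the same invariance. The standard comparison lemma for the scalar inequality $\dot v\le-\alpha_2(v)$ with continuous positive-definite right-hand side furnishes a $\mathcal{KL}$-function $\beta$ (depending only on $\alpha_2$) with $v(t)\le\beta(v(0),t)$ on that interval, and combining the two regimes gives $\mathcal{V}(\chi(t))\le\beta(\mathcal{V}(\chi(0)),t)+\gamma(\norm{w}_\infty)$, i.e.\ \eqref{eq:ISS} with this $\mathcal{V}$, $d$, $\beta$, $\gamma$.

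\emph{Necessity.} Assume \eqref{eq:nonLinearSys} is small-disturbance ISS (Definition~\ref{def:smallISS}) with size function $\mathcal{V}_0$, constant $d$, and gains $\beta\in\mathcal{KL}$, $\gamma\in\mathcal{K}_{[0,d)}$. Introduce the auxiliary system $\dot\chi=F(\chi,v):=f(\chi,\,d\,h(v))$ on $\mathcal{S}$ with unrestricted input $v\in\mathbb{R}^m$; here $F$ inherits the standing regularity and $\norm{d\,h(v)}=d\norm{v}/(1+\norm{v})<d$. Since $w=d\,h(v)$ is a bijection between disturbances $w$ with $\norm{w}_\infty<d$ and arbitrary essentially bounded disturbances $v$, with $\norm{w}_\infty=d\norm{v}_\infty/(1+\norm{v}_\infty)$, the small-disturbance ISS estimate for \eqref{eq:nonLinearSys} is equivalent to the ordinary ISS estimate $\mathcal{V}_0(\chi(t))\le\beta(\mathcal{V}_0(\chi(0)),t)+\tilde\gamma(\norm{v}_\infty)$, $\tilde\gamma(s):=\gamma(ds/(1+s))\in\mathcal{K}$, for the auxiliary system. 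I would then apply the converse ISS-Lyapunov theorem in the size-function formulation (cf.~\cite{Sontag2022}, building on \cite{sontag1995characterizations} and using the sandwiching Lemma~\ref{lm:radiallyUnbounded}), whose proof builds $\mathcal{V}$ from the solutions of the auxiliary system, to obtain a $C^1$ size function $\mathcal{V}$, a $\mathcal{K}$-function $\tilde\alpha_1$, and a continuous positive-definite $\tilde\alpha_2$ with $\norm{v}\le\tilde\alpha_1(\mathcal{V}(\chi))\Rightarrow\nabla\mathcal{V}(\chi)^TF(\chi,v)\le-\tilde\alpha_2(\mathcal{V}(\chi))$. Transforming back, set $\alpha_1(s):=d\,\tilde\alpha_1(s)/(1+\tilde\alpha_1(s))\in\mathcal{K}$; if $\norm{\mu}\le\alpha_1(\mathcal{V}(\chi))$ then $\norm{\mu}<d$ and $v:=h^{-1}(\mu/d)$ satisfies $\norm{v}=\norm{\mu}/(d-\norm{\mu})\le\tilde\alpha_1(\mathcal{V}(\chi))$ and $F(\chi,v)=f(\chi,\mu)$, so $\nabla\mathcal{V}(\chi)^Tf(\chi,\mu)\le-\tilde\alpha_2(\mathcal{V}(\chi))$; thus $\mathcal{V}$ is a small-disturbance ISS-Lyapunov function for \eqref{eq:nonLinearSys}.

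The routine work is the forward-invariance and scalar-comparison steps in sufficiency, and the bookkeeping of the $\mathcal{K}$-function conversions under $h$ (the same reduction also reproves sufficiency: a small-disturbance ISS-Lyapunov function for \eqref{eq:nonLinearSys} is an ordinary ISS-Lyapunov function for the auxiliary system with $\mathcal{K}_\infty$ margin $s\mapsto\alpha_1(s)/(d-\alpha_1(s))$). The main obstacle is the converse-theorem input used for necessity: establishing that ISS in the size-function sense on an \emph{open} state space implies the existence of a $C^1$ ISS-Lyapunov function. This requires transplanting the classical converse ISS-Lyapunov construction to the non-compact setting, with coercivity of the size function substituting for properness in $\norm{\chi}$, and checking smoothness of the resulting function and the regularity of $F=f(\cdot,d\,h(\cdot))$ as $\norm{v}\to\infty$ (equivalently $\norm{w}\to d$); it is for this step that we lean on \cite{Sontag2022}.
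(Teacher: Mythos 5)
Your sufficiency argument matches the paper's: take $d$ to be the supremum of $\alpha_1$, use the margin to show forward invariance of the sublevel set $\{\mathcal V\le\gamma(\norm w_\infty)\}$, invoke coercivity to rule out finite escape, and apply the scalar comparison lemma off that set; the paper cites the comparison principle explicitly (Lemma 4.4 of the reference it calls \cite{Sontag_SIAM_1996}) and otherwise proceeds as you describe. The reparameterization idea for necessity, $w=h(v)$ with $h$ a homeomorphism from $\mathbb R^m$ onto $\{\norm w<d\}$, is also exactly the paper's first move, and your bookkeeping of $\gamma\circ\gamma_1$, the conversion of the margin $\tilde\alpha_1$ back through $h$, and the definition of $\alpha_1$ are all consistent with the paper.

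Where your proposal has a genuine gap is precisely the step you flag as ``the main obstacle.'' You invoke a ``converse ISS-Lyapunov theorem in the size-function formulation'' on the open set $\mathcal S$, leaning on \cite{Sontag2022} for it, but no such off-the-shelf converse result on an arbitrary open $\mathcal S$ is being cited in the paper, and Lemma~\ref{lm:radiallyUnbounded} (the sandwiching lemma you appeal to) is stated only for functions on all of $\mathbb R^n$. The paper closes this gap in two stages: it first proves necessity under the assumption $\mathcal S=\mathbb R^n$, where Lemma~\ref{lm:radiallyUnbounded} applies and yields $\mathcal K_\infty$ bounds $\rho_1(\norm{\chi-\chi^*})\le\mathcal V(\chi)\le\rho_2(\norm{\chi-\chi^*})$, so that the small-disturbance ISS estimate in $\mathcal V$ becomes an ordinary ISS estimate in $\norm{\chi-\chi^*}$ and the classical converse theorem of \cite{sontag1995characterizations} can be applied to the reparameterized system $f_1(\chi,v)$. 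Then, for general open $\mathcal S$, it observes that $\chi^*$ is asymptotically stable for the unforced system with domain of attraction $\mathcal S$, invokes Wilson's theorem together with the Brown--Stallings theorem to obtain a diffeomorphism $\varphi:\mathcal S\to\mathbb R^n$, transports the dynamics to $\mathbb R^n$, applies the already-proved Euclidean case, and pulls the Lyapunov function back as $\mathcal V_1\circ\varphi$. Your proposal identifies the difficulty but does not supply this diffeomorphism reduction, which is the technical content of the necessity proof beyond the reparameterization you correctly anticipate.
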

\begin{proof} \textbf{Sufficiency:} 
This is an adaptation of the proof of the analogous result for the ISS property~\cite{Sontag1989}.
Let $d= \sup_{r \in \mathbb{R}_+} \alpha_1(r)$.  For any inputs $w$ with $\norm{w}_\infty<d$, define the sublevel set $\mathcal{S}_c = \{\chi \in \mathcal{S}| \mathcal{V}(\chi) \leq c \}$, where $c = \alpha_1^{-1}(\norm{w}_\infty)$. Borrowing techniques similar to those in the proof of \cite[Theorem 1]{Sontag1989}, we can show  that $\mathcal{S}_c$ is forward invariant, i.e. if $\chi(t_0) \in \mathcal{S}_c$ for some $t_0 \ge 0$, then $\chi(t) \in \mathcal{S}_c$ for all $t \ge t_0$.

Now, let $t_1 = \inf\{t \in \mathbb{R}_+| \chi(t) \in \mathcal{S}_c \} \leq \infty$. Therefore, for any $t \ge t_1$, we have
\begin{align} \label{eq:sizefunctionUpperBound1}
    \mathcal{V}(\chi(t)) \le \alpha_1^{-1}(\norm{w}_\infty).
\end{align}
For $t < t_1$, $\alpha_1(\mathcal{V}(\chi(t))) \ge \norm{w}_\infty$, which implies that
\begin{align*}
    \frac{\de \mathcal{V}(\chi(t))}{\de t} \le -\alpha_2(\mathcal{V}(\chi(t))) , \quad \forall t < t_1.
\end{align*}
Hence, $\mathcal{V}(\chi(t)) \le \mathcal{V}(\chi(0))$, $\forall t < t_1$. By the comparison principle \cite[Lemma 4.4]{Sontag_SIAM_1996}, there exists a $\mathcal{KL}$-function $\beta$ such that 
\begin{align}\label{eq:sizefunctionUpperBound2}
    \mathcal{V}(\chi(t)) \le \beta(\mathcal{V}(\chi(0)),t), \quad \forall t < t_1.
\end{align}
Combining \eqref{eq:sizefunctionUpperBound1} and \eqref{eq:sizefunctionUpperBound2}, the small-disturbance ISS property \eqref{eq:ISS} follows readily with $\gamma = \alpha_1^{-1}$.

\textbf{Necessity:} We first prove the case when $\mathcal{S} = \mathbb{R}^n$. Reparameterize the input as
\begin{align*}
    w(t) = \frac{d}{d+\norm{v(t)}}v(t) =: h(v(t)).
\end{align*}
It is shown in Lemma \ref{lm:homomorphism} that $h$ is a homeomorphism from $\mathbb{R}^m$ to $\mathcal{W} := \{w \in \mathbb{R}^m|\norm{w} <d\}$ with $v(t) = h^{-1}(w(t))=\frac{d}{d - \norm{w(t)}}w(t)$. With the input change, we have
\begin{align}\label{eq:nonlinearSysRepara}
    \dot{\chi}(t) = f\left(\chi(t), h(v(t))\right) =: f_1\left(\chi(t), v(t)\right).
\end{align}
Since $\gamma$ is a $\mathcal{K}_{[0,d)}$-function and $\gamma_1(r) = \frac{dr}{d+r}$ is a $\mathcal{K}$-function with the range $[0,d)$, $\gamma_2 = \gamma \circ \gamma_1$ is a $\mathcal{K}$-function.
According to \eqref{eq:ISS}, it holds
\begin{align}\label{eq:ISSreparameter}
     \mathcal{V}(\chi(t)) \leq \beta(\mathcal{V}(\chi(0)),t) + \gamma_2(\norm{v}_\infty).
\end{align}    
Since $ \mathcal{V}$ is a size function for $(\mathbb{R}^n,\chi^*)$, according to Lemma \ref{lm:radiallyUnbounded}, there exist $\mathcal{K}_\infty$-functions $\rho_1$ and $\rho_2$, such that
\begin{align}\label{eq:sizeBound}
     \rho_1\left(\norm{\chi-\chi^*}\right) \le  \mathcal{V}(\chi) \le \rho_2\left(\norm{\chi-\chi^*}\right), \quad \forall \chi \in \mathbb{R}^n.
\end{align}
Plugging \eqref{eq:sizeBound} into \eqref{eq:ISSreparameter} yields
\begin{align*}
    \norm{\chi(t)-\chi^*} \le \rho_1^{-1}\left[\beta\left(\rho_2(\norm{\chi(0)-\chi^*}),t\right) + \gamma_2(\norm{v}_\infty) \right].
\end{align*}
By Lemma \ref{lm:weakTriangle}, there exist a $\mathcal{KL}$-function $\beta_1$ and a $\mathcal{K}$-function $\gamma_3$, such that 
\begin{align*}
    \norm{\chi(t)-\chi^*} \le \beta_1\left(\norm{\chi(0)-\chi^*}),t\right) + \gamma_3(\norm{v}_\infty).
\end{align*}
Hence, system \eqref{eq:nonlinearSysRepara} is ISS with respect to $v$. According to \cite[Theorem 1]{sontag1995characterizations}, there exists an ISS-Lyapunov function $\mathcal{V}_1$, such that
\begin{align*}
    \rho_3(\norm{\chi-\chi^*}) \le \mathcal{V}_1(\chi) \le \rho_4(\norm{\chi-\chi^*}), \quad \forall \chi \in \mathbb{R}^n,
\end{align*}
and
\begin{align}\label{eq:V1ISS}
    \nabla \mathcal{V}_1(\chi)^Tf_1(\chi,v) \le - \rho_6(\norm{\chi-\chi^*}),
\end{align}
for any $\chi \in \mathbb{R}^n$ and any $v \in \mathbb{R}^m$ satisfying $\norm{v} \le \rho_5(\norm{\chi-\chi^*})$, where $\rho_{i}$ ($i$=3,4,5) are $\mathcal{K}_\infty$-functions, and $\rho_{6}$ is a $\mathcal{K}$-function. This, in turn, implies that if $\norm{w} \le \gamma_1 \circ \rho_5 \circ \rho_4^{-1} (\mathcal{V}_1(\chi)) $, equation \eqref{eq:V1ISS} holds. 
Since $\gamma_1\circ \rho_5 \circ \rho_4^{-1}$ is a $\mathcal{K}$-function with the range $[0,d)$, we obtain that $\mathcal{V}_1$ is a small-disturbance ISS-Lyapunov function.

Next, we prove the general case when $\mathcal{S}$ is an open subset of $\mathbb{R}^n$. By assumption, for the unforced system $\dot{\chi}(t) = f(\chi(t),0)$, $\chi^*$ is an asymptotically stable point and the domain of stability is $\mathcal{S}$. By \cite[Theorem 2.2]{WILSON1967323} and the Brown–Stallings Theorem \cite{Milnor1964}, $\mathcal{S}$ is diffeomorphic to $\mathbb{R}^n$. We denote by $\varphi: \mathcal{S} \to \Rn$ the diffeomorphism, $\zeta = \varphi(\chi)$, and $\zeta^* = \varphi(\chi^*)$. As a consequence, we have
\begin{align}\label{eq:zetaSystem}
    \dot{\zeta}(t) = J_\varphi(\varphi^{-1}(\zeta(t)))f(\varphi^{-1}(\zeta(t)),w(t)) =: f_2(\zeta(t), w(t)),
\end{align}
where $J_\varphi(\chi)$ is the Jacobian matrix of $\varphi(\chi)$. Since system \eqref{eq:nonLinearSys} is small-disturbance ISS over $\mathcal{S} \times \mathbb{R}^m$, system \eqref{eq:zetaSystem} is small-disturbance ISS over $\Rn \times \mathbb{R}^m$. Then, according to the conclusion from the case of $\mathcal{S} = \Rn$, there exists a small-disturbance ISS-Lyapunov function $\mathcal{V}_1: \Rn \to \R_+$ for system \eqref{eq:zetaSystem} with respect to $\zeta^*$. It readily follows that $\mathcal{V}_1\circ\varphi: \mathcal{S} \to \R_+$ is a small-disturbance ISS-Lyapunov function for system \eqref{eq:nonLinearSys}.
Therefore, the necessity holds.
\end{proof}

\begin{remark}
    It should be mentioned that small-disturbance ISS is not equivalent to the notion of integral ISS \cite{MR1629012}. Consider a scalar nonlinear system
    $\dot{\chi}(t) = - \frac{\chi(t)}{1+\chi(t)^2} + w(t)$.
    Differentiating the Lyapunov function $\mathcal{V}(\chi) = \log(1+\chi^2)$ with respect to time yields 
    $\dot {\mathcal{V}} \leq \frac{-2\chi^2}{(1+\chi^2)^2} + |w|$, which implies that the system is integral ISS \cite{angeli_sontag_wang_iISS_TAC00}. However, for any arbitrarily small input $0<\bar{w} <0.5$, the trajectories of the system diverge whenever the initial conditions $\chi(0) > \frac{1+\sqrt{1-4\bar{w}^2}}{2\bar{w}}$. Hence, the system is not small-disturbance ISS.
\end{remark}

\section{Robustness Analysis of Perturbed Gradient Flows}
Consider the following constrained nonlinear programming problem
\begin{align}\label{eq:optiPro}
    &\min_{z \in \mathcal{Z}} \mathcal{J}(z) 
\end{align}    
where $\mathcal{Z}$ is an open subset of $\mathbb{R}^n$, which is called an admissible set of variables; $\mathcal{J}: \mathcal{Z} \to \mathbb{R}$ is an objective function with a global minimizer $z^*$. 

\begin{definition}\label{def:ProperLossFun}
    A function $\mathcal{J}: \mathcal{Z} \to \mathbb{R}$ is a proper objective function if
    \begin{enumerate}
        \item \LC{$\mathcal{J}(z)$ is a smooth function};
        \item $\mathcal{J}(z) - \mathcal{J}(z^*)$ is a size function for $(\mathcal{Z}, z^*)$;
        \item there exists a $\mathcal{K}$-function $\alpha_3$, such that $\norm{\nabla \mathcal{J}(z)} \ge \alpha_3(\mathcal{J}(z) - \mathcal{J}(z^*))$ (CJS-PL estimate).
    \end{enumerate}
\end{definition}

The perturbed gradient flow for \eqref{eq:optiPro} is 
\begin{align}\label{eq:gradientflowopti}
    \dot{z}(t) = -\eta \nabla \mathcal{J}(z(t)) + e(t),
\end{align}
where $\eta > 0$ is a constant, and $e \in \mathcal{L}_\infty^{n}$ denotes the perturbation to the gradient flow. The perturbation $ e(t)$ may arise from inaccurate gradient estimation for data-driven optimization or arithmetic rounding errors of numerical computation. 

\begin{theorem}\label{eq:gradFlowISS}
    If $\mathcal{J}$ is a proper objective function, then system \eqref{eq:gradientflowopti} is small-disturbance ISS.
\end{theorem}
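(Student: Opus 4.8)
The plan is to apply Theorem~\ref{thm:smallISSSufficient} by exhibiting a small-disturbance ISS-Lyapunov function for \eqref{eq:gradientflowopti}. The natural candidate is $\mathcal{V}(z) := \mathcal{J}(z) - \mathcal{J}(z^*)$. By item~2 of Definition~\ref{def:ProperLossFun} this is already a size function for $(\mathcal{Z},z^*)$, and it is continuously differentiable because $\mathcal{J}$ is smooth, so it only remains to verify the conditional dissipation inequality \eqref{eq:dissipativeLya} with $f(z,\mu) = -\eta\nabla\mathcal{J}(z) + \mu$.

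First I would compute, using the Cauchy--Schwarz inequality,
\[
    \nabla\mathcal{V}(z)^T f(z,\mu) = -\eta\norm{\nabla\mathcal{J}(z)}^2 + \nabla\mathcal{J}(z)^T\mu \le -\eta\norm{\nabla\mathcal{J}(z)}^2 + \norm{\nabla\mathcal{J}(z)}\,\norm{\mu},
\]
so that whenever $\norm{\mu} \le \tfrac{\eta}{2}\norm{\nabla\mathcal{J}(z)}$ the right-hand side is at most $-\tfrac{\eta}{2}\norm{\nabla\mathcal{J}(z)}^2$. The subtlety here is that Definition~\ref{def:smallISSLyap} requires the hypothesis to be phrased as $\norm{\mu}\le\alpha_1(\mathcal{V}(z))$ for a genuine $\mathcal{K}$-function $\alpha_1$, whereas $\norm{\nabla\mathcal{J}(z)}$ is in general only positive definite, not of class $\mathcal{K}$, as a function of $\mathcal{V}(z)$ --- this is exactly why the CJS-PL condition, and not a $\mathcal{K}_\infty$ estimate, is the right hypothesis, and I expect this to be the main (conceptual rather than technical) obstacle. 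It is resolved by invoking item~3 of Definition~\ref{def:ProperLossFun}: since $\norm{\nabla\mathcal{J}(z)} \ge \alpha_3(\mathcal{V}(z))$ with $\alpha_3\in\mathcal{K}$, the condition $\norm{\mu}\le\tfrac{\eta}{2}\alpha_3(\mathcal{V}(z))$ implies $\norm{\mu}\le\tfrac{\eta}{2}\norm{\nabla\mathcal{J}(z)}$.

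I would therefore take $\alpha_1 := \tfrac{\eta}{2}\alpha_3$, which is a $\mathcal{K}$-function, and $\alpha_2(r) := \tfrac{\eta}{2}\alpha_3(r)^2$, which is continuous and positive definite; then $\norm{\mu}\le\alpha_1(\mathcal{V}(z))$ forces $\nabla\mathcal{V}(z)^T f(z,\mu) \le -\tfrac{\eta}{2}\norm{\nabla\mathcal{J}(z)}^2 \le -\alpha_2(\mathcal{V}(z))$, establishing \eqref{eq:dissipativeLya}. Hence $\mathcal{V}$ is a small-disturbance ISS-Lyapunov function and Theorem~\ref{thm:smallISSSufficient} yields the claim; in particular the admissible disturbance bound is $d = \tfrac{\eta}{2}\sup_{r\ge 0}\alpha_3(r)$, which is finite precisely when $\alpha_3$ saturates, consistent with the LQR example in the introduction. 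Forward invariance of the sublevel sets of $\mathcal{V}$ --- which simultaneously keeps $z(t)$ inside $\mathcal{Z}$ and rules out finite-time blow-up, using coercivity of $\mathcal{V}$ --- is already part of the argument behind Theorem~\ref{thm:smallISSSufficient}, so no separate well-posedness discussion is needed.
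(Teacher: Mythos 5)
Your proposal is correct and follows essentially the same approach as the paper: exhibit $\mathcal{V}(z)=\mathcal{J}(z)-\mathcal{J}(z^*)$ as a small-disturbance ISS-Lyapunov function, using the CJS-PL estimate to supply the $\mathcal{K}$-function $\alpha_1$ required in Definition~\ref{def:smallISSLyap}, then invoke Theorem~\ref{thm:smallISSSufficient}. The only (inessential) difference is that you bound the cross term $\nabla\mathcal{J}(z)^T\mu$ directly and condition on $\norm{\mu}\le\tfrac{\eta}{2}\norm{\nabla\mathcal{J}(z)}$, whereas the paper first applies Young's inequality to get $-\tfrac{\eta}{2}\norm{\nabla\mathcal{J}(z)}^2+\tfrac{1}{2\eta}\norm{e}^2$ and then conditions on $\norm{e}\le\tfrac{\eta}{\sqrt{2}}\alpha_3(\mathcal{V}(z))$; this just changes the explicit constants ($\eta/2$ vs.\ $\eta/\sqrt{2}$) and not the structure of the argument.
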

\begin{proof}
    We will prove that $\mathcal{V}_2(z) = \mathcal{J}(z) - \mathcal{J}(z^*)$ is a small-disturbance ISS-Lyapunov function. Firstly, notice that $\mathcal{V}_2$ is a size function for $(\mathcal{Z}, z^*)$. Then, by the Cauchy-Schwarz inequality and Young's inequality, it holds 
    \begin{align}\label{eq:derivativeVo1}
    \begin{split}
       &\nabla{\mathcal{V}}_2(z)^T(-\eta \nabla \mathcal{J}(z) + e)  = - \eta \norm{ \nabla \mathcal{J}(z)}^2 + \nabla \mathcal{J}(z)^T e \\
        &\leq - \frac{\eta}{2} \norm{ \nabla \mathcal{J}(z)}^2 +  \frac{1}{2\eta}\norm{e}^2 \leq - \frac{\eta}{2} \alpha_3^2(\mathcal{V}_2(z)) + \frac{1}{2\eta}\norm{e}^2,
    \end{split}
    \end{align}
    where the last inequality is a direct consequence of the CJS-PL property.
    Hence, if $\norm{e} \leq \frac{\eta}{\sqrt{2}}\alpha_3(\mathcal{V}_2(z))$, it follows from \eqref{eq:derivativeVo1} that
    \begin{align*}
       \nabla{\mathcal{V}}_2(z)^T(-\eta \nabla \mathcal{J}(z) + e) \leq -\frac{\eta}{4}\alpha_3^2(\mathcal{V}_2(z)).
    \end{align*}
    Since $\alpha_3$ is a $\mathcal{K}$-function, $\frac{\eta}{\sqrt{2}}\alpha_3$ is also a $\mathcal{K}$-function. Therefore, by Definition \ref{def:smallISSLyap}, $\mathcal{V}_2(z) = \mathcal{J}(z) - \mathcal{J}(z^*)$ is a small-disturbance ISS-Lyapunov function. According to Theorem \ref{thm:smallISSSufficient}, we conclude that system \eqref{eq:gradientflowopti} is small-disturbance ISS.
\end{proof}

\begin{remark}
Suppose that one has the classical PL inequality, namely
for each $r>0$, there is a $c_r > 0$ such that $\norm{\nabla \mathcal{J}(z)}^2 \ge c_r (\mathcal{J}(z)-\mathcal{J}(z^*))$ for all $z$ in the sublevel set $\{z\in \mathcal{Z}|\mathcal{J}(z)-\mathcal{J}(z^*) \le r\} $. Then there is some positive definite function $\alpha_3$ such that $\norm{\nabla \mathcal{J}(z)} \ge \alpha_3(\mathcal{J}(z)-\mathcal{J}(z^*))$ for all $z \in \mathcal{Z}$. By following \cite{angeli_sontag_wang_iISS_TAC00} and \eqref{eq:derivativeVo1}, it shows that the classical PL condition gives integral ISS.
\end{remark}

\begin{remark}
As discussed in the Introduction, the gradient dominance condition given by the CJS-PL estimate is weaker than its counterpart in \cite[Definition 4.1]{Sontag2022}, where $\alpha_3$ is required to be a $\mathcal{K}_\infty$-function. In addition, the CJS-PL estimate implies that the perturbed gradient flow \eqref{eq:gradientflowopti} is integral ISS.
\end{remark}

\section{Application to LQR Problem}
\subsection{Preliminaries of LQR}
Consider the following linear time-invariant system
\begin{align*}
    \dot{x}(t) = Ax(t) + Bu(t), \,\, x(0) = x_0,
\end{align*}
where $x(t) \in \mathbb{R}^n$ is the state; $x_0$ is the initial state; $u(t) \in \mathbb{R}^{m}$ is the control input; $A$ and $B$ are constant matrices with compatible dimensions. The continuous-time LQR aims at finding a state-feedback controller by solving the following optimal control problem
\begin{align}\label{eq:LQRcost}
    \min_{u \in \mathcal{L}_\infty^{m}}\mathcal{J}_{1}(x_0, u) =  \int_{0}^{\infty} x^T(t)Qx(t) + u^T(t) R u(t) \de t , 
\end{align}
with $Q = Q^T \succeq 0$, and $R=R^T \succ 0$. Under the assumption that $(A,B)$ is stabilizable and $(A,\sqrt{Q})$ is observable, as shown in \cite[Section 8.4]{book_sontag}, the optimal controller is
\begin{align}\label{eq:Koptexpression}
    u^*(x(t)) = -{K^*}x(t), \quad K^* = R^{-1}B^TP^*,
\end{align}
where $P^* = (P^*)^T$ is the unique positive definite solution of the following algebraic Riccati equation (ARE)
\begin{align}\label{eq:continuousARE}
    A^T P^* + P^* A + Q - P^*BR^{-1}B^TP^* = 0.
\end{align}

Let $\mathcal{G} = \{K \in \mathbb{R}^{m \times n}| A-BK \text{ is Hurwitz} \}$ denote the admissible set of all stabilizing control gains. For any stabilizing controller $u(t) = -Kx(t)$, where $K \in \mathcal{G}$, and any nonzero initial state $x_0 \in \mathbb{R}^n$, the corresponding cost is
\begin{align*}
    \mathcal{J}_{1}(x_0, K) = \int_{0}^\infty x_0^T e^{(A-BK)^Tt} (Q + K^TRK) e^{(A-BK) t} x_0 \de t = x_0^T P_K x_0,
\end{align*}
where $P_K = P_K^T$ is the unique positive definite solution of the following Lyapunov equation
\begin{align}\label{eq:continuousLyapunov}
    (A-BK)^{T}P_K + P_K (A-BK) + Q + K^T R K = 0.
\end{align}
Since $K^*$ is the optimal control gain and $P^* = P_{K^*}$ is the minimal cost matrix, by \cite[page 382]{book_sontag}, it holds
\begin{align}\label{eq:PoptMinimum}
\mathcal{J}_1(x_0,K) = x^T_0 P_K x_0 \ge x^T_0 P^* x_0 = \mathcal{J}_1(x_0,K^*), \quad  \forall x_0 \in \mathbb{R}^n.
\end{align}
This implies that $P_K \succeq P^*$ for all $K \in \mathcal{G}$.  

Since the objective function $\mathcal{J}_1$ of the LQR problem depends on the initial condition, we are motivated to study an equivalent optimization problem ($\min_{K \in \mathcal{G}}\mathcal{J}_2(K)$), which is independent of the initial condition. For any initial state, an upper bound for $\mathcal{J}_1(x_0,K)$ is 
\begin{align*}
    \mathcal{J}_1(x_0,K) \le \norm{x_0}^2\Tr{P_K} = \norm{x_0}^2\mathcal{J}_2(K),
\end{align*}
where 
\begin{align}\label{eq:costJc_closedform}
    \mathcal{J}_2(K) := \Tr{P_K},
\end{align}
which is independent of $x_0$ \LC{and is an analytic function \cite[Proposition 3.2]{bu2020policy}.} Since $P_K \succeq P^*$ for any $K \in \mathcal{G}$, $\mathcal{J}_2(K) \ge \mathcal{J}_2(K^*)$. In addition, since $\Tr{P_K} = \Tr{P^*}$ implies $P_K = P^*$ and $K=K^*$, $\mathcal{J}_2(K)$ has a unique minimum at $K^*$. Thus, the optimal control gain $K^*$ can be obtained by the following policy optimization problem 
\begin{align*}
    \min_{K \in \mathcal{G}}\mathcal{J}_2(K).    
\end{align*}

Before calculating $\nabla \mathcal{J}_2(K)$, which denotes the gradient of $\mathcal{J}_2(K)$ over the Euclidean space, let us define the matrix $Y_K \in \mathbb{P}^n$ as the solution of
\begin{align}\label{eq:YKDef}
    (A-BK)Y_K + Y_K (A-BK)^T + I_n = 0.
\end{align}
It is noticed that according to \cite[Lemma 3.18]{book_Zhou}, $Y_K \succ 0$ for any $K \in \mathcal{G}$. In addition, $Y^*$ is defined as the solution of \eqref{eq:YKDef} with $K$ replaced by $K^*$.
Since $A-BK$ is Hurwitz, by Lemma \ref{lm:LyaEquaIntegral}, $Y_K$ can be expressed as
\begin{align}\label{eq:YKexpress}
    Y_K = \int_{0}^{\infty} e^{(A-BK)t}e^{(A-BK)^Tt} \de t.
\end{align}

\begin{lemma}\label{lm:J2Expansion}
    For any $K \in \mathcal{G}$, when it is perturbed by $E$ with $K+E\in \mathcal{G}$ (recall that $\mathcal{G}$ is an open set), the second-order Taylor series approximation of $\mathcal{J}_2(K + E)$ is
    \begin{align}\label{eq:J2Expansion}
    \begin{split}
        \mathcal{J}_2(K+E ) &= \mathcal{J}_2(K )+ 2\Tr{E^T (RK - B^TP_K)Y_K } + \Tr{E^T R EY_K} \\
        &+ 2\Tr{E^T (RK - B^TP_K) \Delta Y_K }+O(\norm{E}_F^3) ,
    \end{split}
    \end{align} 
    
    where
    \begin{align}\label{eq:DeltaYK}
        \Delta Y_K := -\int_{0}^{\infty} e^{(A-BK )t}  ( BE Y_K + Y_K E^T B^T) e^{(A-BK )^{T}t},
    \end{align}
    and $O(\norm{E}_F^3)$ is the remainder of the approximation. 
\end{lemma}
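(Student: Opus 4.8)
The plan is to expand the two Lyapunov equations---\eqref{eq:continuousLyapunov} defining $P_K$ and \eqref{eq:YKDef} defining $Y_K$---to second order in the perturbation $E$, and then to collapse every trace that appears into an expression built only from $Y_K$ and $\Delta Y_K$, using the integral representation of Lemma~\ref{lm:LyaEquaIntegral} and the cyclic property of Lemma~\ref{lm:traceCyclic}.

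First I would set $A_K := A - BK$ (Hurwitz, since $K \in \mathcal{G}$) and note $A - B(K+E) = A_K - BE$. Writing $P_{K+E} = P_K + P^{(1)} + P^{(2)} + O(\norm{E}_F^3)$ with $P^{(i)}$ homogeneous of degree $i$ in $E$, substituting into \eqref{eq:continuousLyapunov} with $K \mapsto K+E$, and matching terms of equal degree (using $R = R^T$ to merge the linear-in-$E$ contributions), I get that the degree-zero relation is \eqref{eq:continuousLyapunov} itself and
\begin{align*}
A_K^T P^{(1)} + P^{(1)} A_K &+ (RK - B^T P_K)^T E + E^T (RK - B^T P_K) = 0,\\
A_K^T P^{(2)} + P^{(2)} A_K &+ E^T R E - E^T B^T P^{(1)} - P^{(1)} B E = 0.
\end{align*}
By Lemma~\ref{lm:LyaEquaIntegral}, any solution of $A_K^T X + X A_K + M = 0$ is $X = \int_0^\infty e^{A_K^T t} M e^{A_K t}\,\de t$, so by Lemma~\ref{lm:traceCyclic} and \eqref{eq:YKexpress} one has the key reduction $\Tr{X} = \Tr{M Y_K}$. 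Applying it to $P^{(1)}$ and simplifying with Lemma~\ref{lm:traceCyclic} and the invariance of the trace under transposition gives $\Tr{P^{(1)}} = 2\Tr{E^T (RK - B^T P_K) Y_K}$---the first-order term---and applying it to $P^{(2)}$ gives $\Tr{P^{(2)}} = \Tr{E^T R E Y_K} - 2\Tr{P^{(1)} B E Y_K}$.

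The crux is then to rewrite $\Tr{P^{(1)} B E Y_K}$ in terms of $\Delta Y_K$. For this I would use the dual trace identity: if $A_K^T X + X A_K + M = 0$ and $A_K Z + Z A_K^T + N = 0$, then $\Tr{X N} = \Tr{M Z}$ (one line: substitute $N = -A_K Z - Z A_K^T$ and apply Lemma~\ref{lm:traceCyclic}). Expanding \eqref{eq:YKDef} with $K \mapsto K+E$ shows that the first-order correction to $Y_K$ solves $A_K Z + Z A_K^T - (BE Y_K + Y_K E^T B^T) = 0$, whose solution is exactly the $\Delta Y_K$ of \eqref{eq:DeltaYK}. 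Taking $X = P^{(1)}$, $M = (RK - B^T P_K)^T E + E^T(RK - B^T P_K)$, $Z = \Delta Y_K$, $N = -(BE Y_K + Y_K E^T B^T)$ in the dual identity, and collapsing the resulting symmetric pairs of traces with Lemma~\ref{lm:traceCyclic} and transposition invariance, I obtain $\Tr{P^{(1)} B E Y_K} = -\Tr{E^T (RK - B^T P_K)\Delta Y_K}$; hence $\Tr{P^{(2)}} = \Tr{E^T R E Y_K} + 2\Tr{E^T (RK - B^T P_K)\Delta Y_K}$. Summing $\mathcal{J}_2(K+E) = \Tr{P_{K+E}} = \Tr{P_K} + \Tr{P^{(1)}} + \Tr{P^{(2)}} + O(\norm{E}_F^3)$ then yields \eqref{eq:J2Expansion}.

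The hard part will be the sign bookkeeping and the justification of the remainder. The sign in front of the $\Delta Y_K$ term flips once inside the dual identity, so that is where care is most needed; I would cross-check it by recording $\Delta Y_K$ and $\Delta P_K := P^{(1)}$ explicitly as integrals and comparing. For the $O(\norm{E}_F^3)$ estimate one needs $E \mapsto P_{K+E}$ to be $C^2$---indeed analytic---near $E = 0$; this follows from the analyticity of $K \mapsto P_K$ recalled after \eqref{eq:costJc_closedform}, or directly from Lemma~\ref{lm:LyaEquaIntegral} together with the smooth dependence of $e^{A_K t}$ on $K$ and uniform exponential-decay bounds on a small ball around $K$, which legitimize differentiation under the integral sign and the termwise matching of the two expansions. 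Everything else is routine manipulation with Lemmas~\ref{lm:traceCyclic}, \ref{lm:LyaEquaIntegral}, and transposition invariance of the trace.
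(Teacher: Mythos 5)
Your argument is correct, and the sign bookkeeping you were worried about does in fact check out: the dual trace identity $\Tr{XN}=\Tr{MZ}$ with $X=P^{(1)}$, $N=-(BEY_K+Y_KE^TB^T)$, $M=(RK-B^TP_K)^TE+E^T(RK-B^TP_K)$, $Z=\Delta Y_K$ gives $-2\Tr{P^{(1)}BEY_K}=2\Tr{E^T(RK-B^TP_K)\Delta Y_K}$, and the overall minus sign in $\Tr{P^{(2)}}=\Tr{E^TREY_K}-2\Tr{P^{(1)}BEY_K}$ then flips it back, reproducing \eqref{eq:J2Expansion}. However, your route is genuinely different from the paper's. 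The paper never expands $P_{K+E}$ order by order. Instead it subtracts the two Lyapunov equations to obtain the \emph{exact} closed-form integral \eqref{eq:PKE-PK} for $P_{K+E}-P_K$, takes traces and cycles to get the exact identity $\mathcal{J}_2(K+E)-\mathcal{J}_2(K)=2\Tr{E^T(RK-B^TP_K)Y_{K+E}}+\Tr{E^TREY_{K+E}}$, and then substitutes only the \emph{first-order} expansion $Y_{K+E}=Y_K+\Delta Y_K+O(\norm{E}_F^2)$; the term $\Tr{E^TRE\,\Delta Y_K}$ is automatically $O(\norm{E}_F^3)$. This avoids solving the second-order Lyapunov equation for $P^{(2)}$ entirely, and it makes the dual trace identity unnecessary — the only perturbative expansion used is for $Y_{K+E}$, not $P_{K+E}$. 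Your order-by-order expansion of $P_{K+E}$ together with the dual trace identity is more systematic and would extend mechanically to higher orders, but it requires proving and applying that extra identity as well as identifying $\Delta Y_K$ inside a trace rather than inserting it directly. Both proofs are valid; the paper's is shorter, yours is more modular.
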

\begin{proof}
    Firstly, we calculate $Y_{K+E}$ when $K$ is perturbed to $K+ E$. Using \eqref{eq:YKDef}, we have
    \begin{align}\label{eq:YKDeltaK}
        (A-BK - BE)Y_{K+E} + Y_{K+E}(A-BK -BE)^T + I_n = 0.
    \end{align}
    Subtracting \eqref{eq:YKDef} from \eqref{eq:YKDeltaK}, we have
    \begin{align*}
        &(A-BK -BE) (Y_{K+E} - Y_K) +  (Y_{K+E} - Y_K) (A-BK -BE )^T \\
        &- BE Y_K - Y_K E^T B^T = 0.
    \end{align*}
    Let $\Delta Y_K$ denote the first-order term in the Taylor expansion of $Y_{K+E}$, i.e. $Y_{K+E} = Y_K + \Delta Y_K + O(\norm{E}_F^2)$. Then, $\Delta Y_K$ satisfies 
    \begin{align*}
        (A-BK) \Delta Y_K +  \Delta Y_K (A-BK  )^T - BE Y_K - Y_K E^T B^T = 0,
    \end{align*}    
    which, in turn, implies \eqref{eq:DeltaYK}.

    Then, we will calculate $P_{K + E}$ for the perturbed control gain $K+E$. By \eqref{eq:continuousLyapunov}, we have
    \begin{align}\label{eq:PKDeltaK}
    \begin{split}
    &(A-BK-BE)^{T}P_{K + E} + P_{K + E} (A-BK-BE) \\
    &+ Q + (K+E)^T R (K+E) = 0.        
    \end{split}
    \end{align}
    Subtracting \eqref{eq:continuousLyapunov} from \eqref{eq:PKDeltaK} yields
    \begin{align*}
    \begin{split}
    &(A-BK-BE)^{T}(P_{K + E}-P_K) + (P_{K + E}-P_K)  (A-BK-BE)\\
    &+  E^T (RK - B^TP_K) + (RK - B^TP_K)^T E + E^T R E= 0,
    \end{split}
    \end{align*}
    which is equivalent to
    \begin{align}\label{eq:PKE-PK}
    \begin{split}
        &P_{K + E}-P_K = \int_{0}^{\infty} e^{(A-BK-BE)^Tt} [ E^T (RK - B^TP_K) \\
        &+ (RK - B^TP_K)^T E + E^T R E] e^{(A-BK-BE)t}\de t.
    \end{split}
    \end{align}
    
    Taking the trace of \eqref{eq:PKE-PK}, considering $\mathcal{J}_2(K+E) -\mathcal{J}_2(K) = \Tr{P_{K + E}-P_K}$, and using the cyclic property of the trace in Lemma \ref{lm:traceCyclic}, we can obtain
    \begin{align}\label{eq:JKdiff}
    \begin{split}
        &\mathcal{J}_2(K+E ) -\mathcal{J}_2(K) 
        \\
        &=2\Tr{E^T (RK - B^TP_K)Y_{K+E}} + \Tr{E^T R EY_{K+E}}.
    \end{split}
    \end{align}
    Since the first-order Taylor series approximation of $Y_{K+E}$ is $Y_K + \Delta Y_K$, plugging it into \eqref{eq:JKdiff}, we can obtain \eqref{eq:J2Expansion}.
\end{proof}

In \eqref{eq:J2Expansion}, the first-order term is $2\Tr{E^T (RK - B^TP_K)Y_K } = \innprod{E}{2(RK - B^TP_K)Y_K}$, which should be equal to $\innprod{E}{\nabla \mathcal{J}_2(K)}$ by Taylor expansion. In other words, $\innprod{E}{2(RK - B^TP_K)Y_K} = \innprod{E}{\nabla \mathcal{J}_2(K)}$ for any small perturbation $E \in \mathbb{R}^{m \times n}$. Hence, the gradient of the objective function $\mathcal{J}_2(K)$ is 
\begin{align}\label{eq:JcGradient}
    \nabla \mathcal{J}_2(K) = 2(RK - B^TP_K)Y_K.
\end{align}

We claim that $\mathcal{J}_2(K)$ is a proper objective function (Definition \ref{def:ProperLossFun}). To this end, we will first prove several intermediate lemmas providing bounds on $Y_K$ and $P_K$. The following lemma gives a lower bound of $\eigmin{Y_K}$.

\begin{lemma}\label{lm:eigminYKLowerBound}
For any $K \in \mathcal{G}$, we have
\begin{align*}
    \eigmin{Y_K} \geq \frac{1}{2\norm{A-BK^*}_F + 2\norm{B}\norm{K-K^*}_F},
\end{align*}     
where $K^*$ is the optimal control gain in \eqref{eq:Koptexpression}.
\end{lemma}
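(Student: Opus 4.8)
The plan is to extract a lower bound on $\eigmin{Y_K}$ directly from the Lyapunov equation \eqref{eq:YKDef} defining $Y_K$, by pairing it against a unit eigenvector associated with the smallest eigenvalue. Let $v$ be a unit eigenvector with $Y_K v = \eigmin{Y_K} v$. Multiplying \eqref{eq:YKDef} on the left by $v^T$ and on the right by $v$ gives $2 v^T (A-BK) Y_K v + 1 = 0$, hence $\eigmin{Y_K} = -\tfrac{1}{2} v^T (A-BK) v \cdot \eigmin{Y_K}/(v^T Y_K v)$ — more cleanly, since $Y_K v = \eigmin{Y_K} v$, we get $2\eigmin{Y_K}\, v^T(A-BK) v = -1$, so $\eigmin{Y_K} = \frac{-1}{2 v^T(A-BK)v}$. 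This already shows $v^T(A-BK)v < 0$, and taking absolute values, $\eigmin{Y_K} = \frac{1}{2|v^T(A-BK)v|} \ge \frac{1}{2\norm{A-BK}}$ since $|v^T(A-BK)v| \le \norm{A-BK}$ for a unit vector $v$ (using that the spectral norm dominates the numerical radius).

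Next I would convert $\norm{A-BK}$ into the claimed expression. Writing $A-BK = (A-BK^*) - B(K-K^*)$ and applying the triangle inequality together with submultiplicativity, $\norm{A-BK} \le \norm{A-BK^*} + \norm{B}\,\norm{K-K^*}$. Finally, to match the statement verbatim I would use the standard norm comparisons $\norm{M} \le \norm{M}_F$ to replace $\norm{A-BK^*}$ and $\norm{K-K^*}$ by their Frobenius-norm counterparts, yielding $\norm{A-BK} \le \norm{A-BK^*}_F + \norm{B}\,\norm{K-K^*}_F$, and therefore
\begin{align*}
    \eigmin{Y_K} \ge \frac{1}{2\norm{A-BK^*}_F + 2\norm{B}\,\norm{K-K^*}_F},
\end{align*}
as desired.

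I do not anticipate a genuine obstacle here; the only point requiring a little care is the inequality $|v^T(A-BK)v| \le \norm{A-BK}$ for a unit vector $v$, which holds because the numerical radius of a (not necessarily symmetric) matrix is bounded by its spectral norm — this follows from Cauchy–Schwarz, $|v^T(A-BK)v| \le \norm{v}\,\norm{(A-BK)v} \le \norm{A-BK}$. One could alternatively use the integral representation \eqref{eq:YKexpress} and bound $\eigmin{Y_K} = \min_{\norm{u}=1} \int_0^\infty \norm{e^{(A-BK)^T t} u}^2\,\de t$ from below, but that route requires controlling $\norm{e^{(A-BK)^T t}}$ uniformly in $t$, which is less direct than the Lyapunov-equation argument; I would keep the eigenvector pairing as the main line of proof.
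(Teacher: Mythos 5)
Your argument is correct and follows essentially the same route as the paper's: both proofs pair the Lyapunov equation \eqref{eq:YKDef} against a unit eigenvector associated with $\eigmin{Y_K}$ to obtain $2\eigmin{Y_K}\,q^T(A-BK)q = -1$, then bound the scalar and use the triangle inequality on $A-BK = (A-BK^*) - B(K-K^*)$. The only cosmetic difference is how the scalar is controlled: the paper passes through $\eigmin{(A-BK)+(A-BK)^T}$ via the Rayleigh principle, whereas you bound $|q^T(A-BK)q|$ directly by Cauchy--Schwarz, which is marginally more direct but yields the same constant.
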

\begin{proof}
    Let $q \in \mathbb{R}^n$ denote a unit eigenvector of $Y_K$ associated with the eigenvalue $\eigmin{Y_K}$. Pre and post multiplying \eqref{eq:YKDef} by $q^T$ and $q$, respectively, we have
    \begin{align}\label{eq:A-BKeig1}
        \eigmin{Y_K}q^T[(A-BK) + (A-BK)^T]q = -1.
    \end{align}
    Since $Y_K \succ 0$, it follows from \eqref{eq:A-BKeig1} that $q^T[(A-BK) + (A-BK)^T]q<0$. Consequently, we have
    \begin{align}\label{eq:A-BKeig2}
    \begin{split}
        &\eigmin{(A-BK) + (A-BK)^T} = \min_{\norm{\nu}=1} \nu^T[(A-BK) + (A-BK)^T]\nu \\
        &\leq q^T[(A-BK) + (A-BK)^T]q<0.
    \end{split}
    \end{align}
    where the first equality is obtained by the Rayleigh principle \cite[Theorem 4.2.2]{book_Horn}. By \eqref{eq:A-BKeig1} and \eqref{eq:A-BKeig2}, it holds
    \begin{align*}
    \begin{split}
        &\eigmin{Y_K} = \frac{1}{-q^T[(A-BK) + (A-BK)^T]q} \\
        &\geq \frac{1}{- \eigmin{(A-BK) + (A-BK)^T}} \\
        &\geq \frac{1}{2\norm{A-BK^*}_F + 2 \norm{B} \norm{K-K^*}_F},
    \end{split}
    \end{align*}
   where the last line uses the relation $\norm{B(K^* - K)} \le \norm{B}\norm{(K^* - K)}\le \norm{B}\norm{(K^* - K)}_F$. Thus, the proof is completed.
\end{proof}

The following lemma gives the bounds of $\Tr{Y_K}$.
\begin{lemma}\label{lm:YKBound}
    Given $K \in \mathcal{G}$ and $Q \succ 0$, we have
    \begin{align}\label{eq:YKBound}
        \frac{\Tr{P_K-P^*}}{\norm{R}\norm{K-K^*}_F^2} \le \Tr{Y_{K}} \le  \frac{\Tr{P_K}}{\eigmin{Q}}
    \end{align}
\end{lemma}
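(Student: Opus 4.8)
The plan is to prove the two inequalities in \eqref{eq:YKBound} separately, in each case exploiting the integral representations of $P_K$, $P^*$, and $Y_K$ together with the Lyapunov equations \eqref{eq:continuousLyapunov} and \eqref{eq:YKDef}. The unifying idea is that $\Tr{P_K}$ and $\Tr{P_K - P^*}$ can both be rewritten as integrals of quadratic forms along the closed-loop flow $e^{(A-BK)t}$, and the same flow appears in \eqref{eq:YKexpress} for $Y_K$; so the trick is to match the integrands and then bound the ``weight'' matrices by their extreme eigenvalues via Lemma \ref{lm:traceIneq}.

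For the \textbf{upper bound}, I would start from the Lyapunov equation \eqref{eq:continuousLyapunov}, which gives $(A-BK)^T P_K + P_K(A-BK) = -(Q + K^TRK) \preceq -Q$. Using Lemma \ref{lm:LyaEquaIntegral} applied to the Lyapunov equation defining $Y_K$ (namely \eqref{eq:YKDef}), I would compute $\Tr{(Q+K^TRK)Y_K}$ by substituting the integral form \eqref{eq:YKexpress} and using the cyclic property (Lemma \ref{lm:traceCyclic}): this yields $\Tr{(Q+K^TRK)Y_K} = \Tr{P_K}$, which is exactly the identity already used to derive \eqref{eq:JKdiff}–\eqref{eq:JcGradient} with $E = 0$ intuition, or can be obtained directly by multiplying \eqref{eq:continuousLyapunov} by $Y_K$, multiplying \eqref{eq:YKDef} by $P_K$, taking traces, and subtracting. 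Once $\Tr{P_K} = \Tr{(Q + K^TRK)Y_K} \ge \Tr{Q Y_K} \ge \eigmin{Q}\Tr{Y_K}$ (the last step by Lemma \ref{lm:traceIneq}, using $Y_K \succeq 0$ and $Q \succ 0$, together with $K^TRK \succeq 0$), the upper bound $\Tr{Y_K} \le \Tr{P_K}/\eigmin{Q}$ follows at once. This direction is routine.

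For the \textbf{lower bound}, the key observation is that $P_K - P^*$ satisfies a Lyapunov equation with the closed-loop matrix $A - BK$ and a right-hand side controlled by $(K-K^*)$. Concretely, subtracting the ARE \eqref{eq:continuousARE} (rewritten with the closed-loop matrix $A - BK^*$) from \eqref{eq:continuousLyapunov}, and completing the square, one gets the standard identity $(A-BK)^T(P_K - P^*) + (P_K - P^*)(A-BK) + (K-K^*)^T R (K-K^*) = 0$. Applying Lemma \ref{lm:LyaEquaIntegral} and then pairing with $Y_K$ exactly as in the upper-bound computation (multiply this identity by $Y_K$, multiply \eqref{eq:YKDef} by $P_K - P^*$, take traces, subtract, use Lemma \ref{lm:traceCyclic}) gives $\Tr{P_K - P^*} = \Tr{(K-K^*)^T R (K-K^*) Y_K}$. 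Now bound above: $\Tr{(K-K^*)^TR(K-K^*)Y_K} \le \eigmax{Y_K}\Tr{(K-K^*)^TR(K-K^*)} \le \norm{Y_K}\,\norm{R}\,\norm{K-K^*}_F^2$ by Lemma \ref{lm:traceIneq} and submultiplicativity of the trace pairing; since $\norm{Y_K} = \eigmax{Y_K} \le \Tr{Y_K}$, this rearranges to $\Tr{Y_K} \ge \Tr{P_K - P^*}/(\norm{R}\,\norm{K-K^*}_F^2)$.

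The main obstacle — really the only subtle point — is establishing the ``Lyapunov pairing identity'' cleanly, i.e. verifying that for the Lyapunov equations $M^T X + X M + N = 0$ and $M W + W M^T + I = 0$ with $M$ Hurwitz one has $\Tr{NW} = \Tr{X}$; this is where Lemma \ref{lm:traceCyclic} and Lemma \ref{lm:LyaEquaIntegral} do the work, and it must be invoked twice with $N = Q + K^TRK$ and $N = (K-K^*)^TR(K-K^*)$ respectively. I would also need to double-check that the completion-of-square identity for $P_K - P^*$ holds verbatim (using $K^* = R^{-1}B^TP^*$ so that the cross terms $B^TP^* = RK^*$ collapse correctly); this is a known computation but it is the place where a sign error would propagate. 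Everything after that identity is a one-line application of Lemma \ref{lm:traceIneq}.
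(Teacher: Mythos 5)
Your proposal is correct and follows essentially the same route as the paper: derive $\Tr{P_K}=\Tr{(Q+K^TRK)Y_K}$ and $\Tr{P_K-P^*}=\Tr{(K-K^*)^TR(K-K^*)Y_K}$ via the Lyapunov/integral representations and completion of squares, then apply Lemma~\ref{lm:traceIneq}. The only cosmetic difference is in the lower bound, where you pair $\eigmax{Y_K}$ with $\Tr{(K-K^*)^TR(K-K^*)}$ (then use $\eigmax{Y_K}\le\Tr{Y_K}$), whereas the paper pairs $\eigmax{(K-K^*)^TR(K-K^*)}$ with $\Tr{Y_K}$; both assignments in the trace inequality yield the identical final estimate.
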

\begin{proof}    
    Considering $K^* = R^{-1}B^TP^*$, we can rewrite \eqref{eq:continuousARE} as
    \begin{align}\label{eq:AoptPopt}
        (A-BK^*)^T P^* + P^* (A-BK^*) + Q + (K^*)^T R K^* = 0.
    \end{align}
   It follows from \eqref{eq:Koptexpression} and \eqref{eq:AoptPopt} that
    \begin{align}\label{eq:AoptPoptREwrite}
    \begin{split}
        &(A-BK)^TP^* + P^*(A-BK) + Q + (K^*)^TRK^* \\
        &+ (K-K^*)^TRK^* + (K^*)^TR(K-K^*) = 0.
    \end{split}
    \end{align}
    Subtracting \eqref{eq:AoptPoptREwrite} from \eqref{eq:continuousLyapunov} and completing the squares yield
    \begin{align*}
        (A-BK)^T(P_K - P^*) +(P_K - P^*)(A-BK) + (K-K^*)^TR(K-K^*) = 0.
    \end{align*}  
    Since $A-BK$ is Hurwitz, by Lemma \ref{lm:LyaEquaIntegral}, we have
    \begin{align}\label{eq:PKdiff1}
        P_K - P^* = \int_{0}^{\infty} e^{(A-BK)^{T}t} (K-K^*)^TR(K-K^*) e^{(A-BK)t} \de t.
    \end{align}
    Taking the trace of \eqref{eq:PKdiff1} and using the cyclic property of the trace in Lemma \ref{lm:traceCyclic} and \eqref{eq:YKexpress}, we obtain 
    \begin{align}\label{eq:TrPKPopt}
    \begin{split}
        \Tr{P_K - P^*} &= \Tr{\int_{0}^{\infty}e^{(A-BK)t} e^{(A-BK)^{T}t}  \de t (K-K^*)^TR(K-K^*) } \\
        &= \Tr{Y_K(K-K^*)^TR(K-K^*)}.
    \end{split}
    \end{align}   
    By the trace inequality in Lemma \ref{lm:traceIneq} and considering the following relation
    \begin{align*}
        \norm{(K-K^*)^TR(K-K^*)} \le \Tr{(K-K^*)^TR(K-K^*)} \le \norm{R} \norm{K-K^*}_F^2,
    \end{align*}
    we have
    \begin{align*}
        \Tr{P_K - P^*} \le \norm{R}\norm{K-K^*}_F^2 \Tr{Y_K}.
    \end{align*}
    Hence, the lower bound of $\Tr{Y_K}$ in \eqref{eq:YKBound} is obtained.
    
    Since $A-BK$ is Hurwitz, it follows from Lemma \ref{lm:LyaEquaIntegral} and \eqref{eq:continuousLyapunov} that 
    \begin{align}\label{eq:PKIntofABK}
        P_K = \int_{0}^{\infty} e^{(A-BK)^Tt}(Q + K^T R K) e^{(A-BK)t} \de t.
    \end{align}
    Taking the trace of \eqref{eq:PKIntofABK}, and again using the cyclic property of the trace in Lemma \ref{lm:traceCyclic} and the trace inequality in Lemma \ref{lm:traceIneq}, we have

\begin{align*}
\begin{split}
    \Tr{P_K} &\ge \Tr{\int_{0}^{\infty} e^{(A-BK)^Tt}Qe^{(A-BK)t} \de t } \\
    &=\Tr{Q\int_{0}^{\infty} e^{(A-BK)t}e^{(A-BK)^Tt} \de t } \ge \eigmin{Q}\Tr{Y_K}.
\end{split}
\end{align*}
Hence, the right inequality in \eqref{eq:YKBound} follows readily.
\end{proof}

\begin{lemma}\label{lm:PKPoptLowerBound}
    For any $K \in \mathcal{G}$, $\Tr{P_K - P^*} \geq \alpha_4(\norm{K-K^*}_F)$, where $\alpha_4$ is a $\mathcal{K}_\infty$-function defined as
    \begin{align*}
        \alpha_4(r) := \frac{\eigmin{R}r^2}{2\norm{A-BK^*}_F + 2 \norm{B} r}, \quad \forall r \ge 0.
    \end{align*}
\end{lemma}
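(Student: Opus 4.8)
The plan is to combine the integral representation of $P_K - P^*$ that already appears in the proof of Lemma~\ref{lm:YKBound} with the eigenvalue bound of Lemma~\ref{lm:eigminYKLowerBound}. Recall identity~\eqref{eq:TrPKPopt}, namely $\Tr{P_K - P^*} = \Tr{Y_K (K-K^*)^T R (K-K^*)}$. This follows purely from subtracting the Lyapunov equation~\eqref{eq:continuousLyapunov} from the rewritten ARE~\eqref{eq:AoptPoptREwrite}, completing the square, applying Lemma~\ref{lm:LyaEquaIntegral}, and invoking the cyclic property of the trace together with~\eqref{eq:YKexpress}; in particular it holds for every $K \in \mathcal{G}$ and does not require $Q \succ 0$, so it is available for the present statement.

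Next I would apply the trace inequality of Lemma~\ref{lm:traceIneq} twice. First, with $S = Y_K$ (symmetric, indeed $Y_K \in \mathbb{P}^n$) and $P = (K-K^*)^T R (K-K^*) \succeq 0$, one gets $\Tr{Y_K (K-K^*)^T R (K-K^*)} \ge \eigmin{Y_K}\,\Tr{(K-K^*)^T R (K-K^*)}$. Second, rewriting $\Tr{(K-K^*)^T R (K-K^*)} = \Tr{R (K-K^*)(K-K^*)^T}$ by the cyclic property and applying Lemma~\ref{lm:traceIneq} with $S = R$ and $P = (K-K^*)(K-K^*)^T \succeq 0$, one obtains $\Tr{(K-K^*)^T R (K-K^*)} \ge \eigmin{R}\,\norm{K-K^*}_F^2$. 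Chaining these two estimates with $\eigmin{Y_K} \ge \left(2\norm{A-BK^*}_F + 2\norm{B}\norm{K-K^*}_F\right)^{-1}$ from Lemma~\ref{lm:eigminYKLowerBound} yields exactly $\Tr{P_K - P^*} \ge \alpha_4(\norm{K-K^*}_F)$.

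It then remains to verify that $\alpha_4$ is a $\mathcal{K}_\infty$-function. Writing $\alpha_4(r) = \eigmin{R}\,r^2/(c_1 + c_2 r)$ with $c_1 = 2\norm{A-BK^*}_F \ge 0$ and $c_2 = 2\norm{B} \ge 0$, note that $\eigmin{R} > 0$ since $R \succ 0$, and $c_1 + c_2 > 0$ (if both vanished then $A = BK^* = 0$, so $(A,B)$ would fail to be stabilizable). Hence $\alpha_4$ is well defined and continuous on $\R_+$ with $\alpha_4(0) = 0$; a one-line computation gives $\alpha_4'(r) = \eigmin{R}(2c_1 r + c_2 r^2)/(c_1 + c_2 r)^2 > 0$ for $r > 0$, so $\alpha_4$ is strictly increasing, and $\alpha_4(r) \to \infty$ as $r \to \infty$ because $\alpha_4$ grows at least linearly (like $(\eigmin{R}/c_2)\,r$ when $c_2 > 0$, and quadratically like $(\eigmin{R}/c_1)\,r^2$ when $c_2 = 0$, in which case necessarily $c_1 > 0$). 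I do not anticipate a substantive obstacle: the only points needing care are choosing the lower-bound direction (the $\eigmin{\cdot}$ side) in Lemma~\ref{lm:traceIneq}, and the degenerate-case bookkeeping for $A - BK^* = 0$ or $B = 0$ in the $\mathcal{K}_\infty$ check, both handled above.
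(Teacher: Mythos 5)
Your proposal is correct and follows essentially the same route as the paper: both start from the identity $\Tr{P_K - P^*} = \Tr{Y_K (K-K^*)^T R (K-K^*)}$ from \eqref{eq:TrPKPopt}, lower-bound the trace by $\eigmin{Y_K}\eigmin{R}\norm{K-K^*}_F^2$ using Lemma~\ref{lm:traceIneq}, and then invoke Lemma~\ref{lm:eigminYKLowerBound}. The only differences are cosmetic: you split the eigenvalue bound into two explicit applications of Lemma~\ref{lm:traceIneq} where the paper writes one compound inequality, and you add a (correct, if routine) verification that $\alpha_4$ is indeed a $\mathcal{K}_\infty$-function, which the paper simply asserts.
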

\begin{proof}

Taking the trace of \eqref{eq:PKdiff1}, and using the cyclic property of the trace in Lemma \ref{lm:traceCyclic}, we have
\begin{align*}
    \Tr{P_K - P^*} &= \Tr{(K-K^*)^T R (K - K^*) Y_K} \\
    &\ge  \eigmin{Y_K} \eigmin{R} \norm{K-K^*}_F^2.
\end{align*}
Considering the lower bound of $\eigmin{Y_K}$ in Lemma \ref{lm:eigminYKLowerBound}, we can obtain 
\begin{align*}
    \Tr{P_K - P^*}  \ge  \frac{\eigmin{R}\norm{K-K^*}_F^2}{2\norm{A-BK^*}_F + 2 \norm{B} \norm{K-K^*}_F} = \alpha_4(\norm{K-K^*}_F).
\end{align*}
\end{proof}

\begin{lemma}[Lemma 3.3 in \cite{bu2020policy}]\label{lm:coercive}
    The objective function $\mathcal{J}_2(K)$ is coercive, i.e. for any sequence $\{K_k\}_{k=0}^\infty$, $K_k \to \partial \mathcal{G}$ or $\norm{K_k}_F \to \infty$, it holds $\mathcal{J}_2(K_k) \to \infty$, as $k \to \infty$. 
\end{lemma}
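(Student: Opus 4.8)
The plan is to verify, separately, the two cases in the coercivity condition of Definition~\ref{def:sizeFunc} that appear in the statement; equivalently, the whole claim amounts to showing that every sublevel set $\{K\in\mathcal{G}\mid \Tr{P_K}\le c\}$ is a compact subset of $\mathcal{G}$. The case $\norm{K_k}_F\to\infty$ is immediate from the estimate already proved: by Lemma~\ref{lm:PKPoptLowerBound},
\[
\mathcal{J}_2(K_k)=\Tr{P_{K_k}}=\Tr{P_{K_k}-P^*}+\Tr{P^*}\ge\alpha_4\big(\norm{K_k-K^*}_F\big)+\Tr{P^*},
\]
and since $\norm{K_k-K^*}_F\to\infty$ while $\alpha_4$ is of class $\mathcal{K}_\infty$, the right-hand side diverges, so $\mathcal{J}_2(K_k)\to\infty$.

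For the case $K_k\to\partial\mathcal{G}$, i.e.\ $\mathrm{dist}(K_k,\partial\mathcal{G})\to0$, I would argue by contradiction. If $\mathcal{J}_2(K_k)\not\to\infty$, pass to a subsequence along which $\Tr{P_{K_k}}\le M$ for some finite $M$. The displayed bound then forces $\alpha_4(\norm{K_k-K^*}_F)\le M-\Tr{P^*}$, so $\{K_k\}$ is bounded along this subsequence; extracting once more, $K_k\to\bar K$ for some $\bar K$. Since $\partial\mathcal{G}$ is closed and $\mathrm{dist}(K_k,\partial\mathcal{G})\to0$, we get $\bar K\in\partial\mathcal{G}$, hence $\bar K\notin\mathcal{G}$ ($\mathcal{G}$ is open), i.e.\ $A-B\bar K$ is not Hurwitz. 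At the same time $\{P_{K_k}\}$ is a bounded sequence of symmetric positive semidefinite matrices (as $\eigmax{P_{K_k}}\le\Tr{P_{K_k}}\le M$), so after a further subsequence $P_{K_k}\to\bar P=\bar P^T\succeq0$. Passing to the limit in the Lyapunov equation~\eqref{eq:continuousLyapunov} written for $K_k$ gives $(A-B\bar K)^T\bar P+\bar P(A-B\bar K)+Q+\bar K^TR\bar K=0$.

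The crux of the boundary case is to show this configuration is impossible. Put $W:=Q+\bar K^TR\bar K$, which is positive definite since $Q\succ0$ and $R\succ0$ (the standing hypothesis used already in Lemma~\ref{lm:YKBound}; if one prefers to assume only $Q\succeq0$, one replaces this by observability of $(A,\sqrt Q)$, which makes $(A-B\bar K,\sqrt W)$ observable). Consider $V(x)=x^T\bar Px\ge0$ along the flow of $\dot x=(A-B\bar K)x$, for which $\dot V=-x^TWx$. If $\bar P$ were singular, a nonzero null vector $v$ of $\bar P$ would make $V(v)=0$ a global minimum of $V$, yet $\dot V$ at $v$ equals $-v^TWv<0$, a contradiction; hence $\bar P\succ0$. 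But then $\dot V\le-\tfrac{\eigmin{W}}{\eigmax{\bar P}}V$, so every trajectory decays to zero and $A-B\bar K$ is Hurwitz, i.e.\ $\bar K\in\mathcal{G}$, contradicting $\bar K\in\partial\mathcal{G}$. Hence $\mathcal{J}_2(K_k)\to\infty$ in this case as well, completing the proof.

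The only genuinely delicate points are (i) that Lemma~\ref{lm:PKPoptLowerBound} is exactly what keeps the candidate limit gains bounded, so that the Lyapunov equation can be passed to the limit along a convergent subsequence, and (ii) the observation that a positive \emph{semi}definite solution of a strict Lyapunov inequality is automatically positive definite and therefore certifies asymptotic stability --- this is what forbids a limit gain from lying on $\partial\mathcal{G}$. A fully equivalent alternative for the boundary case is to run the same compactness argument on $Y_K$ via~\eqref{eq:YKDef}, whose right-hand side is the \emph{fixed} matrix $I_n$ (so there is no $K$-dependence to track in the limit), concluding $\Tr{Y_{K_k}}\to\infty$, and then to invoke $\Tr{P_K}\ge\eigmin{Q}\Tr{Y_K}$ from Lemma~\ref{lm:YKBound}.
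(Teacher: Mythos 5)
The paper does not prove this lemma at all: it is stated as a citation of Lemma~3.3 in the Bu et al.\ reference (\texttt{bu2020policy}), with no argument given in the text. So there is no paper proof to compare against; what you have supplied is a self-contained replacement, and it is correct. A few remarks on how it sits relative to the surrounding material.

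Your first case, $\norm{K_k}_F\to\infty$, is handled cleanly by re-using the paper's own Lemma~\ref{lm:PKPoptLowerBound}: that lemma appears earlier in the section, so there is no circularity, and because $\alpha_4$ is genuinely $\mathcal{K}_\infty$ the divergence is immediate. For the boundary case, your contradiction argument is sound: the same Lemma~\ref{lm:PKPoptLowerBound} keeps the candidate subsequence of gains bounded, which is exactly what lets you extract $\bar K$ and $\bar P$ and pass to the limit in the Lyapunov equation~\eqref{eq:continuousLyapunov}. The Lyapunov step --- a positive semidefinite solution with strictly negative derivative along the flow must in fact be positive definite and therefore certifies Hurwitz stability --- is the crux, and you argue it correctly. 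One small simplification available to you is that $P_K\succeq P^*\succ 0$ for all $K\in\mathcal{G}$ by~\eqref{eq:PoptMinimum}, so $\bar P\succeq P^*\succ 0$ automatically; the nullspace argument you give is then superfluous, although it does make the proof more self-contained. You are also right to flag the hypothesis issue: the paper's standing assumption is $Q\succeq 0$ with $(A,\sqrt{Q})$ observable, while your streamlined version uses $Q\succ 0$ (as in Lemma~\ref{lm:YKBound}); your parenthetical observation that observability of $(A,\sqrt Q)$ makes $(A-B\bar K,\sqrt{Q+\bar K^TR\bar K})$ observable is the correct way to cover the general case. Finally, your proposed alternative via $Y_K$ and~\eqref{eq:YKDef} is also valid and slightly cleaner in the limit-passing step because the inhomogeneous term is the constant $I_n$, at the price of again needing $Q\succ 0$ (or detectability) to transfer the divergence to $\Tr{P_K}$ via Lemma~\ref{lm:YKBound}.
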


\begin{lemma}\label{lm:MKlowerbound}
    For any $K \in \mathcal{G}$, let $K' := R^{-1}B^TP_K$ and $M_K := (K - K')^T R (K - K')$. Then,
    \begin{align}\label{eq:KK'geqKdiff}
        \Tr{M_K} \geq a \norm{K - K^*}_F^2 + a' \Tr{P_K-P^*},
    \end{align}
    where $a$ and $a'$ are constants defined as
    \begin{align}\label{eq:aa'def}
        a := \frac{\eigmin{R}\eigmin{Y^*}}{2\eigmin{Y^*}+2\eigmax{Y^*}}, \quad 
        a' := \frac{1}{\eigmin{Y^*} + \eigmax{Y^*}}.
    \end{align}
\end{lemma}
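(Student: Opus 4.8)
Before seeing the authors' argument, here is the route I would take.

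\medskip
\noindent\textbf{Overall strategy.} The idea is to produce two different lower bounds for $\Tr{M_K}$ and then take a convex combination of them whose weight is tuned so that the two $\|K-K^*\|_F$-type terms recombine, via the triangle inequality, into exactly the coefficient $a$. The starting point is the elementary decomposition
$$K-K' = (K-K^*) - (K'-K^*), \qquad K'-K^* = R^{-1}B^T(P_K-P^*),$$
which also shows that $N_K := (K'-K^*)^T R (K'-K^*) = (P_K-P^*)BR^{-1}B^T(P_K-P^*)\succeq 0$.

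\medskip
\noindent\textbf{Step 1: a Lyapunov identity re-centered at $K^*$.} Completing the square in $K$ inside the Lyapunov equation \eqref{eq:continuousLyapunov} for $P_K$ gives $A^TP_K + P_KA + Q - P_KBR^{-1}B^TP_K = -M_K$. Subtracting the ARE \eqref{eq:continuousARE} for $P^*$, writing $P_K = P^* + (P_K-P^*)$, and using $B^TP^* = RK^*$ to simplify the quadratic terms, I expect to arrive at
$$(A-BK^*)^T(P_K-P^*) + (P_K-P^*)(A-BK^*) = -(M_K - N_K).$$
Since $A-BK^*$ is Hurwitz, Lemma \ref{lm:LyaEquaIntegral} together with \eqref{eq:YKexpress} applied with $K=K^*$ (so that the Gramian is $Y^*$) yields, after taking the trace and using the cyclic property,
$$\Tr{P_K-P^*} = \Tr{(M_K-N_K)Y^*} = \Tr{M_KY^*} - \Tr{N_KY^*}.$$

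\medskip
\noindent\textbf{Step 2: two lower bounds, then combine.} From the trace identity and Lemma \ref{lm:traceIneq} ($\Tr{M_KY^*}\le\eigmax{Y^*}\Tr{M_K}$ and $\Tr{N_KY^*}\ge\eigmin{Y^*}\Tr{N_K}\ge\eigmin{Y^*}\eigmin{R}\|K'-K^*\|_F^2$) I get the first bound
$$\Tr{M_K} \ge \frac{1}{\eigmax{Y^*}}\Tr{P_K-P^*} + \frac{\eigmin{Y^*}\eigmin{R}}{\eigmax{Y^*}}\|K'-K^*\|_F^2,$$
and from $M_K = (K-K')^TR(K-K')$ the trivial bound $\Tr{M_K}\ge\eigmin{R}\|K-K'\|_F^2$. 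Taking the convex combination with weight $\lambda = \eigmin{Y^*}/(\eigmin{Y^*}+\eigmax{Y^*})$ on the trivial bound makes the coefficient of $\Tr{P_K-P^*}$ equal to $a'$ and makes the coefficients of $\|K-K'\|_F^2$ and $\|K'-K^*\|_F^2$ a common value $c = \eigmin{Y^*}\eigmin{R}/(\eigmin{Y^*}+\eigmax{Y^*})$. Finally $c\|K-K'\|_F^2 + c\|K'-K^*\|_F^2 \ge \tfrac{c}{2}(\|K-K'\|_F + \|K'-K^*\|_F)^2 \ge \tfrac{c}{2}\|K-K^*\|_F^2 = a\|K-K^*\|_F^2$, which is \eqref{eq:KK'geqKdiff}.

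\medskip
\noindent\textbf{Main obstacle.} The delicate part is Step 1: choosing to re-center at $K^*$ and pushing the completion of squares and the ARE subtraction through so that the cross terms organize precisely into the square $M_K$ and the nonnegative residual $N_K$, with all signs correct. Once that identity is established, Steps 2 is essentially bookkeeping with Lemma \ref{lm:traceIneq}; the fact that the free weight $\lambda$ can be selected to reproduce exactly the constants $a$ and $a'$ in \eqref{eq:aa'def} is a good consistency check that the identity has the right form.
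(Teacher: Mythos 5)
Your proposal is correct and follows essentially the same route as the paper: you establish the same central Lyapunov identity $(A-BK^*)^T(P_K-P^*) + (P_K-P^*)(A-BK^*) = -(M_K - N_K)$ and the same trace consequence $\Tr{P_K-P^*} = \Tr{Y^*M_K} - \Tr{Y^*N_K}$, which is exactly the paper's display following \eqref{eq:PKPoptYopt}. The only cosmetic difference is in the finishing algebra: the paper bounds $\innprod{K-K^*}{R(K-K^*)}$ by a factor-of-two parallelogram inequality in the $R$-weighted inner product (via Lemma \ref{lm:CSInequality} and Young's inequality) and applies the trace inequality at the very end, whereas you pass to Frobenius norms first, take an explicit convex combination of two lower bounds, and finish with the ordinary triangle inequality together with $(x+y)^2 \le 2(x^2+y^2)$; the two manipulations are algebraically equivalent and produce the same constants $a$ and $a'$.
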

\begin{proof}
    We can rewrite \eqref{eq:continuousLyapunov} as
    \begin{align*}
    \begin{split}
        &(A-BK^*)^TP_K + P_K(A-BK^*) + Q + K^TRK \\
        &+ (K^* - K)^TB^TP_K + P_KB(K^* - K) = 0.  
    \end{split}
    \end{align*}
    Considering $K' = R^{-1}B^T P_K$ and completing the squares, we have
    \begin{align}\label{eq:AoptPK}
    \begin{split}
        &(A-BK^*)^TP_K + P_K(A-BK^*) + Q + (K^*)^T R K^*  \\
        & + (K - K')^T R (K - K')- (K' - K^*)^T R (K' - K^*) = 0.  
    \end{split}
    \end{align}
    Subtracting \eqref{eq:AoptPopt} from \eqref{eq:AoptPK} yields
    \begin{align}\label{eq:AoptPKPoptDiff}
    \begin{split}
        &(A-BK^*)^T(P_K - P^*) + (P_K - P^*)(A-BK^*)  \\
        & + (K - K')^T R (K - K')- (K' - K^*)^T R (K' - K^*) = 0. 
    \end{split}
    \end{align}
    Since $A-BK^*$ is Hurwitz, according to Lemma \ref{lm:LyaEquaIntegral}, we have
    \begin{align}\label{eq:PKPoptYopt}
    \begin{split}
        P_K - P^* &= \int_{0}^{\infty} e^{(A-BK^*)^{T}t} [M_K - (K' - K^*)^T R (K' - K^*) ] e^{(A-BK^*)t} \de t. 
    \end{split}
    \end{align} 
    Taking the trace of \eqref{eq:PKPoptYopt}, and using the cyclic property of the trace in Lemma \ref{lm:traceCyclic} yield
    \begin{align*}
        \Tr{P_K - P^*} 
        = \Tr{Y^* M_K} - \Tr{Y^* (K' - K^*)^TR(K'-K^*)}. 
    \end{align*}     
    By the trace inequality in Lemma \ref{lm:traceIneq}, we have 
    \begin{align}\label{eq:MKupperbound1}
    \begin{split}
         \eigmax{Y^*}\Tr{M_K} &\geq \eigmin{Y^*}\innprod{(K'-K^*)}{R(K'-K^*)} \\
         &+  \Tr{P_K - P^*}.
    \end{split}
    \end{align}

    Using Lemma \ref{lm:CSInequality} and Young's inequality, we can obtain
    \begin{align}\label{eq:MKupperbound2}
    \begin{split}
        &\innprod{K-K^*}{R(K-K^*)} \\
        &= \innprod{(K-K')+(K'-K^*)}{R[(K-K')+(K'-K^*)]} \\
        &= \innprod{K-K'}{R(K-K')} + \innprod{K'-K^*}{R(K'-K^*)} \\
        &+ 2\innprod{K-K'}{R(K'-K^*)} \\
        & \le 2\innprod{K-K'}{R(K-K')} + 2\innprod{K'-K^*}{R(K'-K^*)}.
    \end{split}
    \end{align}
    Noticing that $\Tr{M_K} = \innprod{K-K'}{R(K-K')}$. Plugging \eqref{eq:MKupperbound1} into \eqref{eq:MKupperbound2} and using the trace inequality in Lemma \ref{lm:traceIneq} yield
    \begin{align}\label{eq:MKupperbound3}
    \begin{split}
        &\eigmin{R} \norm{K-K^*}_F^2 \le \left(2 + 2\frac{\eigmax{Y^*}}{\eigmin{Y^*}}\right)\Tr{M_K} \\
        &- \frac{2}{\eigmin{Y^*}}\Tr{P_K - P^*}.
    \end{split}
    \end{align}
    Hence, \eqref{eq:KK'geqKdiff}  follows from \eqref{eq:MKupperbound3}.
\end{proof}

\subsection{Perturbed Standard Gradient Flow}
In this subsection, we apply small-disturbance ISS to analyze the robustness property of the perturbed standard gradient flow for the LQR problem. At any position $K \in \mathcal{G}$, the steepest-descent direction of $\mathcal{J}_2(K)$ is the solution to the problem \cite{book_Nocedal} 
\begin{align*}
    \min_{E} \innprod{E}{\nabla \mathcal{J}_2(K)}, \quad \text{subject to } \innprod{E}{E} = 1.
\end{align*}
Therefore, by the method of Lagrange multipliers, the steepest-descent direction of $\mathcal{J}_2(K)$ is 
\[
-\nabla \mathcal{J}_2(K)/\norm{\nabla \mathcal{J}_2(K)}_F \,.
\]
Consequently, along the the steepest-descent direction and considering the expression of $\nabla \mathcal{J}_2(K)$ in \eqref{eq:JcGradient}, the standard gradient flow is \cite{bu2020policy,Mohammadi2022}
\begin{align*}
    \frac{\de K(s)}{\de{s}} = -\eta \nabla \mathcal{J}_2(K(s)) = -2\eta(RK(s) - B^TP(s))Y(s), 
\end{align*}
where $\eta >0$ is a constant, $P(s) := P_{K(s)}$, and $Y(s) := Y_{K(s)}$. It is seen from \eqref{eq:JcGradient} that the calculation of the gradient $\nabla \mathcal{J}_2(K(s))$ relies on the system matrices, which are unknown in the setting of model-free RL. Many data-driven methods, e.g. approximate dynamic programming \cite{tutorial_Jiang} and finite-difference algorithms \cite{fazel2018global}, are proposed to numerically approximate the gradient. Therefore, in practice, $\widehat{\nabla \mathcal{J}_2}(K(s))$, instead of ${\nabla \mathcal{J}_2}(K(s))$, is utilized to optimize the control gain, and the perturbation $W(s) = \eta[{\nabla \mathcal{J}_2}(K(s)) -\widehat{\nabla \mathcal{J}_2}(K(s))]$ is unavoidable for the gradient flow. Hence, with the perturbation $W \in \mathcal{L}_{\infty}^{m \times n}$, the perturbed gradient flow of the LQR problem is
\begin{align}\label{eq:gradientflow_con}
    \frac{\de K(s)}{\de{s}} =  -2\eta(RK(s) - B^TP(s))Y(s) + W(s).
\end{align}

The following lemma shows that the gradient of $\mathcal{J}_2(K)$ is lower bounded by a $\mathcal{K}$-function of the deviation from the optimal value (gradient dominance condition), which is an important property of a proper objective function in Definition \ref{def:ProperLossFun}.

\begin{lemma} \label{lm:gradientClassK}
 There exists a $\mathcal{K}$-function $\xi_1$, such that for any $K \in \mathcal{G}$, 
\begin{align*}
        \norm{\nabla \mathcal{J}_2(K)}_F \geq  \xi_1(\mathcal{J}_2(K) - \mathcal{J}_2(K^*)).
    \end{align*}
\end{lemma}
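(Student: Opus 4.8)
The plan is to bound $\norm{\nabla \mathcal{J}_2(K)}_F$ below in terms of $\eigmin{Y_K}$ and $\Tr{M_K}$ (with $M_K$ as in Lemma~\ref{lm:MKlowerbound}), then feed in Lemmas~\ref{lm:MKlowerbound} and~\ref{lm:eigminYKLowerBound} to re-express the bound through $r:=\norm{K-K^*}_F$ and $p:=\mathcal{J}_2(K)-\mathcal{J}_2(K^*)=\Tr{P_K-P^*}$, and finally eliminate $r$ to leave a $\mathcal{K}$-function of $p$.

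First, I would rewrite the gradient: since $RK-B^TP_K=R(K-K')$ with $K':=R^{-1}B^TP_K$, formula~\eqref{eq:JcGradient} gives $\nabla\mathcal{J}_2(K)=2R(K-K')Y_K$, hence, using the cyclic property (Lemma~\ref{lm:traceCyclic}),
\begin{align*}
\norm{\nabla\mathcal{J}_2(K)}_F^2 = 4\Tr{Y_K^2(K-K')^TR^2(K-K')}.
\end{align*}
Two applications of the trace inequality (Lemma~\ref{lm:traceIneq}) --- peeling off the factor $\eigmin{Y_K}^2$, and then comparing $\Tr{(K-K')^TR^2(K-K')}$ with $\Tr{M_K}=\Tr{(K-K')^TR(K-K')}$ --- produce a bound of the form $\norm{\nabla\mathcal{J}_2(K)}_F^2\ge c\,\eigmin{Y_K}^2\,\Tr{M_K}$ for an explicit constant $c>0$ depending only on $R$. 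Now Lemma~\ref{lm:MKlowerbound} gives $\Tr{M_K}\ge a r^2+a'p$ and Lemma~\ref{lm:eigminYKLowerBound} gives $\eigmin{Y_K}\ge(c_0+c_1r)^{-1}$, where $c_0:=2\norm{A-BK^*}_F>0$ (note $A-BK^*$ is Hurwitz, hence nonzero) and $c_1:=2\norm{B}\ge0$, so that
\begin{align*}
\norm{\nabla\mathcal{J}_2(K)}_F^2 \ge \frac{c\,(a r^2+a'p)}{(c_0+c_1r)^2}.
\end{align*}

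The one genuinely nonroutine step is eliminating $r$, which is not controlled by $p$ from the useful side. The key point is that the $a r^2$ term in the numerator and the denominator both grow quadratically in $r$, so the ratio is bounded below uniformly over $r\ge0$; concretely, I would verify the elementary algebraic identity
\begin{align*}
(a r^2+a'p)(a c_0^2+c_1^2 a'p) - a a'p\,(c_0+c_1r)^2 = (a c_0 r - a'c_1 p)^2 \ge 0,
\end{align*}
which yields $\dfrac{a r^2+a'p}{(c_0+c_1r)^2}\ge\dfrac{a a'p}{a c_0^2+c_1^2 a'p}$ for all $r,p\ge0$ (the denominators being positive since $c_0>0$). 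Therefore $\norm{\nabla\mathcal{J}_2(K)}_F\ge\xi_1(\mathcal{J}_2(K)-\mathcal{J}_2(K^*))$ with $\xi_1(p):=\bigl(c\,a a'p/(a c_0^2+c_1^2 a'p)\bigr)^{1/2}$.

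Finally I would note that $\xi_1$ is a $\mathcal{K}$-function: $p\mapsto c\,a a'p/(a c_0^2+c_1^2 a'p)$ is continuous, strictly increasing and vanishes at $p=0$ (it saturates at $c\,a/c_1^2$ when $c_1>0$ and is linear when $c_1=0$), and $\sqrt{\cdot}$ preserves these properties, a composition of $\mathcal{K}$-functions being a $\mathcal{K}$-function. The main obstacle is thus concentrated entirely in recognizing the uniform-in-$r$ estimate above; once that identity is in hand, everything else is bookkeeping. This argument uses only Lemmas~\ref{lm:traceCyclic}, \ref{lm:traceIneq}, \ref{lm:eigminYKLowerBound}, \ref{lm:MKlowerbound} and~\eqref{eq:JcGradient}; coercivity and positive definiteness of $\mathcal{J}_2-\mathcal{J}_2(K^*)$, needed afterward to conclude that $\mathcal{J}_2$ is a proper objective function, are supplied by Lemma~\ref{lm:coercive} and the discussion preceding it.
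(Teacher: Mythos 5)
Your proof is correct and follows essentially the same route as the paper: it starts from $\nabla\mathcal{J}_2(K)=2R(K-K')Y_K$, applies the cyclic and trace inequalities to get $\norm{\nabla\mathcal{J}_2(K)}_F^2\ge 4\eigmin{R}\eigmin{Y_K}^2\Tr{M_K}$, and then substitutes Lemmas~\ref{lm:eigminYKLowerBound} and~\ref{lm:MKlowerbound} to reduce the problem to bounding $\sigma(r,p):=(a_1r^2+a_2p)/(a_3+a_4r)^2$ from below uniformly in $r$. The only place you diverge is the $r$-elimination step, which you correctly flag as the crux: the paper computes $\partial_r\sigma=0$ to get the minimizer $r^*=\tfrac{a_2a_4}{a_1a_3}p$ and then \emph{further weakens} by replacing $\sigma(r^*,p)$ with $\sigma(r^*,0)$, yielding $\xi_1(p)=a_5p/(a_3+a_6p)$, which is linear near $p=0$; your sum-of-squares identity is an exact closed-form evaluation of $\min_{r\ge0}\sigma(r,p)=a_1a_2p/(a_1a_3^2+a_2a_4^2p)$ (indeed the perfect square vanishes precisely at $r=r^*$), so your resulting $\xi_1(p)\propto\sqrt{p}$ near zero and is strictly sharper than the paper's, while both saturate at the same value $\sqrt{a_1}/a_4$ as $p\to\infty$ and both are genuine $\mathcal{K}$-functions. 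This is a minor but clean improvement; all other ingredients (Lemmas~\ref{lm:traceCyclic}, \ref{lm:traceIneq}, \ref{lm:eigminYKLowerBound}, \ref{lm:MKlowerbound}, equation~\eqref{eq:JcGradient}, and the edge-case checks $c_0>0$ and $c_1\ge0$) are used exactly as in the paper.
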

\begin{proof}
    Considering the expression of $\nabla \mathcal{J}_2(K)$ in \eqref{eq:JcGradient}, the expression of $K'$ in Lemma \ref{lm:MKlowerbound}, and the cyclic property of the trace in Lemma \ref{lm:traceCyclic}, we have 
    \begin{align*}
         \Tr{\nabla \mathcal{J}_2^T(K)\nabla \mathcal{J}_2(K)} = 4\Tr{Y_K^2(K-K')^TR^2(K-K')}.
    \end{align*}
    By the trace inequality in Lemma \ref{lm:traceIneq}, it holds
    \begin{align*}
    \begin{split}
        &\Tr{\nabla \mathcal{J}_2^T(K) \nabla \mathcal{J}_2(K)} \ge 4 \eigmin{Y_K}^2\Tr{(K-K')^T\sqrt{R}R\sqrt{R}(K-K')} \\
        &= 4 \eigmin{Y_K}^2\Tr{\sqrt{R}(K-K')(K-K')^T\sqrt{R}R} \\
        &\ge 4 \eigmin{Y_K}^2 \eigmin{R}\Tr{\sqrt{R}(K-K')(K-K')^T\sqrt{R}} \\
        &= 4 \eigmin{Y_K}^2 \eigmin{R}\Tr{M_K}.
    \end{split}
    \end{align*}
    By Lemmas \ref{lm:eigminYKLowerBound} and \ref{lm:MKlowerbound}, it follows that
    \begin{align}\label{eq:gradientDominant1}
    \begin{split}
         \Tr{\nabla \mathcal{J}_2^T(K) \nabla \mathcal{J}_2(K)} &\geq \eigmin{R}  \frac{a\norm{K-K^*}_F^2 + a'\Tr{P_K - P^*}} {(\norm{A-BK^*}_F +  \norm{B} \norm{K-K^*}_F)^2}.
    \end{split}
    \end{align}
    To simplify notations, let 
    \begin{align}\label{eq:defa1234}
        a_1 := \eigmin{R}a, \quad a_2 := \eigmin{R}a', \quad a_3 := \norm{A-BK^*}_F, \quad a_4 := \norm{B},
    \end{align}
    and 
    \begin{align*}
         \sigma(r, p) := \frac{a_1 r^2 + a_2 p} {(a_3 +  a_4 r)^2}.
    \end{align*}
    It is clear that $\sigma(r,p) \ge \sigma(r,0)$ for $p \ge 0$. Taking the derivative of $\sigma(r, p)$ with respect to $r$, we have
    \begin{align*}
        \frac{\partial \sigma(r,p)}{\partial r} = \frac{2a_1a_3r  - 2a_2a_4p}{(a_3 +  a_4 r)^3}.
    \end{align*}
    When $p \ge \frac{a_1a_3 }{a_2 a_4} r$, $\sigma(r,p)$ is strictly decreasing in $r$. When $p \le \frac{a_1a_3 }{a_2 a_4} r$, $\sigma(r,p)$ is strictly increasing in $r$. Hence, for each $p$, $r=\frac{a_2a_4}{a_1a_3} p$ is the minimum point of $\sigma(r,p)$, and 
    \begin{align}\label{eq:gradientDominant2}
        \sigma(r,p) \geq \sigma(\frac{a_2a_4}{a_1a_3} p,p) \geq \sigma(\frac{a_2a_4}{a_1a_3} p,0)=: \xi^2_{1}(p),
    \end{align}
    where 
    \begin{align*}
        \xi_{1}(p) := \frac{a_5 p}{a_3 + a_6p},
    \end{align*}
    and 
    \begin{align}\label{eq:a56def}
        a_5 := \frac{a_2a_4}{\sqrt{a_1}a_3}, \quad a_6 := \frac{a_2a_4^2}{a_1a_3}.
    \end{align}
    Since $\frac{\de \xi_1(p)}{\de p} = \frac{a_3 a_5}{(a_3 + a_6p)^2} > 0$, $\xi_1$ is a $\mathcal{K}$-function with the range $[0,\frac{\sqrt{a_1}}{a_4})$.

    Plugging \eqref{eq:gradientDominant2} into \eqref{eq:gradientDominant1} yields
    \begin{align*}
        \norm{\nabla \mathcal{J}_2(K)}_F = \Tr{\nabla \mathcal{J}_2^T(K) \nabla \mathcal{J}_2(K)}^{\frac{1}{2}} \ge  \xi_1\left(\Tr{P_K - P^*}\right).
    \end{align*}    
\end{proof}
\begin{remark}
As remarked in the Introduction, the gradient dominance condition in Lemma \ref{lm:gradientClassK}, the CJS-PL condition, can be considered as a generalization of the well-known Polyak-Łojasiewicz (PL) condition which only holds on a compact set of stabilizing control gains \cite{Mohammadi2022,bu2020policy}. The gradient dominance condition in Lemma \ref{lm:gradientClassK} removes the restriction to compact sets.
\end{remark}

We next revisit the one-dimensional system mentioned in the Introduction, in order to illustrate Lemma \ref{lm:gradientClassK}. Suppose that $m=n=1$ and $A=B=Q=R=1$. In this case, the admissible set is $\mathcal{G}=\{K|K > 1\}$, and one obtains
\begin{align}\label{eq:oneDExample}
\begin{split}
    &P^* = 1+\sqrt{2}, \quad K^* = 1+\sqrt{2}, \quad Y_K = \frac{1}{2(K-1)}, \quad Y^* = \frac{\sqrt{2}}{4} ,   \\
    & \mathcal{J}_2(K) = \frac{1 + K^2}{2(K-1)}, \quad \mathcal{J}_2(K) - \mathcal{J}_2(K^*) = \frac{(K-K^*)^2}{2(K-1)} \,.
\end{split}
\end{align}
The constants in \eqref{eq:aa'def}, \eqref{eq:defa1234} and \eqref{eq:a56def} can be computed as
\begin{align*}
    &a = \frac{1}{4}, \quad a' = \sqrt{2}, \quad a_1 = \frac{1}{4}, \quad a_2 = \sqrt{2}, \\
    &a_3 = \sqrt{2}, \quad a_4 = 1, \quad a_5 = 2, \quad a_6 = 4.
\end{align*}
Consequently, we have
\begin{align*}
    \xi_1(p) = \frac{2p}{\sqrt{2} + 4p}, \quad \xi_1\left(\mathcal{J}_2(K) - \mathcal{J}_2(K^*)\right) = \frac{(K-K^*)^2}{\sqrt{2}(K-1) + 2(K-K^*)^2}.
\end{align*}
The gradient of $\mathcal{J}_2(K)$ is 
\begin{align*}
    \nabla \mathcal{J}_2(K) = \frac{K^2 - 2K - 1}{2(K-1)^2} = \frac{(K-K^*)^2 + 2\sqrt{2}(K-K^*)}{2(K-1)^2}.
\end{align*}
When $K \ge K^*$, $\nabla \mathcal{J}_2(K) \ge 0$, and 
\begin{align*}
    &\frac{\nabla \mathcal{J}_2(K)}{\xi_1\left(\mathcal{J}_2(K) - \mathcal{J}_2(K^*)\right)} \\
    &= \frac{2(K-K^*)^3 + 5\sqrt{2}(K-K^*)^2 + 6(K-K^*) + 4\sqrt{2}}{2(K-K^*)^3 + 4\sqrt{2}(K-K^*)^2 + 4(K-K^*)} \ge 1.
\end{align*}
When $K^* \ge K > 1$,  $\nabla \mathcal{J}_2(K) \le 0$, and 
\begin{align*}
\begin{split}
    &{-\nabla \mathcal{J}_2(K)} - {\xi_1\left(\mathcal{J}_2(K) - \mathcal{J}_2(K^*)\right)} \\
    &= \frac{-4(K-1)^4 + 7\sqrt{2}(K-1)^3 -4(K-1)^2 -6\sqrt{2}(K-1) + 8}{2\sqrt{2}(K-1)^3 + 4 (K-1)^2(K-K^*)^2} \ge 0.
\end{split}
\end{align*}
Therefore, $\norm{\nabla \mathcal{J}_2(K)} \ge \xi_1\left(\mathcal{J}_2(K) - \mathcal{J}_2(K^*)\right)$, which is consistent with Lemma \ref{lm:gradientClassK}.

Based on Lemma \ref{lm:gradientClassK}, we are ready to state the main result on the small-disturbance ISS property of the perturbed standard gradient flow in \eqref{eq:gradientflow_con}.

\begin{theorem}\label{thm:smallISSGradientFlow}
    System \eqref{eq:gradientflow_con} is small-disturbance ISS with respect to $W$. 
\end{theorem}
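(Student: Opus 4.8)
The plan is to verify that $\mathcal{J}_2$ is a proper objective function in the sense of Definition~\ref{def:ProperLossFun}, and then invoke Theorem~\ref{eq:gradFlowISS} directly. Three properties need checking. First, $\mathcal{J}_2(K) = \Tr{P_K}$ is analytic (hence smooth) on the open set $\mathcal{G}$, as already noted after \eqref{eq:costJc_closedform}. Second, $\mathcal{V}(K) := \mathcal{J}_2(K) - \mathcal{J}_2(K^*)$ must be a size function for $(\mathcal{G}, K^*)$: continuity is immediate; positive definiteness with respect to $K^*$ follows from \eqref{eq:PoptMinimum} together with the observation (already made in the text) that $\Tr{P_K} = \Tr{P^*}$ forces $P_K = P^*$ and $K = K^*$; and coercivity is exactly Lemma~\ref{lm:coercive} (noting that $\mathcal{J}_2(K_k)\to\infty$ is equivalent to $\mathcal{J}_2(K_k)-\mathcal{J}_2(K^*)\to\infty$). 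Third, the CJS-PL estimate $\norm{\nabla\mathcal{J}_2(K)}_F \ge \alpha_3(\mathcal{J}_2(K)-\mathcal{J}_2(K^*))$ with $\alpha_3$ of class $\mathcal{K}$ is precisely the content of Lemma~\ref{lm:gradientClassK}, taking $\alpha_3 = \xi_1$.

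Having assembled these three facts, the perturbed standard gradient flow \eqref{eq:gradientflow_con} is exactly an instance of the general perturbed gradient flow \eqref{eq:gradientflowopti} with $z = K$, $\mathcal{Z} = \mathcal{G}$, $\mathcal{J} = \mathcal{J}_2$, $\eta$ replaced by $2\eta$ (absorbing the factor of $2$ in \eqref{eq:JcGradient}), and $e = W$. One small point of care: the paper's inner-product and norm on matrix space is the Frobenius one, under which $\norm{\nabla\mathcal{J}_2(K)}_F^2 = \innprod{\nabla\mathcal{J}_2(K)}{\nabla\mathcal{J}_2(K)}$, so all the Cauchy-Schwarz/Young steps inside the proof of Theorem~\ref{eq:gradFlowISS} go through verbatim with $\norm{\cdot}$ read as $\norm{\cdot}_F$ on $\mathbb{R}^{m\times n}$ identified with $\mathbb{R}^{mn}$. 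Then Theorem~\ref{eq:gradFlowISS} applies and yields small-disturbance ISS with respect to $W$.

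I do not anticipate a genuine obstacle here, since all the hard analytic work — the eigenvalue lower bound on $Y_K$ (Lemma~\ref{lm:eigminYKLowerBound}), the trace bounds (Lemma~\ref{lm:YKBound}), the $M_K$ estimate (Lemma~\ref{lm:MKlowerbound}), the resulting $\mathcal{K}$-function gradient dominance (Lemma~\ref{lm:gradientClassK}), and coercivity (Lemma~\ref{lm:coercive}) — has already been carried out in the preceding subsection. If anything, the only thing to be careful about is bookkeeping: confirming that "$\mathcal{J}_2$ is coercive" in Lemma~\ref{lm:coercive} matches clause~3 of Definition~\ref{def:sizeFunc} (it does, since the two conditions $K_k\to\partial\mathcal{G}$ or $\norm{K_k}_F\to\infty$ are stated identically), and confirming the positive-definiteness claim above. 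So the proof is essentially a one-line citation: "$\mathcal{J}_2$ is a proper objective function by Lemmas~\ref{lm:coercive} and~\ref{lm:gradientClassK} (and \eqref{eq:PoptMinimum}); apply Theorem~\ref{eq:gradFlowISS}."
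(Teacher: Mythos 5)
Your proposal is correct and follows essentially the same route as the paper's proof: verify that $\mathcal{J}_2$ is a proper objective function (smoothness, size-function property via Lemma~\ref{lm:coercive} and uniqueness of the minimizer, and the CJS-PL estimate from Lemma~\ref{lm:gradientClassK}), then invoke Theorem~\ref{eq:gradFlowISS}. Your extra bookkeeping remarks about the factor of $2$ in $\nabla\mathcal{J}_2$ and the Frobenius-norm identification of $\mathbb{R}^{m\times n}$ with $\mathbb{R}^{mn}$ are correct and, if anything, slightly more careful than what the paper states explicitly.
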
 
\begin{proof}
Define 
\begin{align}\label{eq:costV3}
    \mathcal{V}_3(K) := \mathcal{J}_2(K) - \mathcal{J}_2(K^*)
\end{align}
It follows from \eqref{eq:costJc_closedform} that $\mathcal{V}_3(K) = \Tr{P_K - P^*}$. Clearly, $\mathcal{V}_3(K)$ is smooth in $K$ \cite[Proposition 3.2]{bu2020policy}. Since $K^*$ is the unique minimum of $\mathcal{J}_2(K)$, $\mathcal{V}_3(K)$ is a positive definite function with respect to $K^*$. The coercivity of $\mathcal{V}_3(K)$ can be obtained by Lemma \ref{lm:coercive}. Therefore, by Definition \ref{def:sizeFunc}, $\mathcal{V}_3(K)$ is a size function for $(\mathcal{G},K^*)$. Hence, by Lemma \ref{lm:gradientClassK} and Definition \ref{def:ProperLossFun}, $\mathcal{J}_2$ is a proper objective function. According to Theorem \ref{eq:gradFlowISS}, the proof of Theorem \ref{thm:smallISSGradientFlow} is completed.
\end{proof}

As a direct consequence of Theorem \ref{thm:smallISSGradientFlow}, an estimate of $\norm{K(s) - K^*}_F$ is provided in the following corollary.
\begin{corollary}
    There exist a constant $d_1 > 0$, a $\mathcal{KL}$-function $\beta_2(\cdot,\cdot)$, and a $\mathcal{K}_{[0,d_1)}$-function $\gamma_4$, such that for all perturbations $W$ essentially bounded by $d_1$ (i.e. $\norm{W}_\infty < d_1$), and all initial conditions $K(0) \in \mathcal{G}$, $K(s)$ satisfies
    \begin{align}\label{eq:KsmallISS}
        \norm{K(s) - K^*}_F \le \beta_2(\mathcal{V}_3(K(0)), s) + \gamma_4(\norm{W}_\infty), \quad \forall s\ge 0,
    \end{align}
where $\mathcal{V}_3$ is defined in \eqref{eq:costV3}.
\end{corollary}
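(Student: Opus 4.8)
The plan is to combine the small-disturbance ISS property of \eqref{eq:gradientflow_con} established in Theorem \ref{thm:smallISSGradientFlow} with the $\mathcal{K}_\infty$ lower bound on $\Tr{P_K-P^*}$ from Lemma \ref{lm:PKPoptLowerBound}, and then to split the resulting composite bound using the weak triangle inequality. Concretely, the proof of Theorem \ref{thm:smallISSGradientFlow} goes through Theorem \ref{eq:gradFlowISS}, whose ISS-Lyapunov function is $\mathcal{J}(z)-\mathcal{J}(z^*)$; for the LQR problem this is exactly $\mathcal{V}_3(K)=\mathcal{J}_2(K)-\mathcal{J}_2(K^*)=\Tr{P_K-P^*}$ of \eqref{eq:costV3}. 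Hence, by Definition \ref{def:smallISS} there are a constant $d_1>0$, a $\mathcal{KL}$-function $\beta$, and a $\mathcal{K}_{[0,d_1)}$-function $\gamma$ such that, for every $W$ with $\norm{W}_\infty<d_1$ and every $K(0)\in\mathcal{G}$, the solution $K(s)$ stays in $\mathcal{G}$ for all $s\ge 0$ and
\begin{align*}
    \mathcal{V}_3(K(s)) \le \beta(\mathcal{V}_3(K(0)),s) + \gamma(\norm{W}_\infty), \quad \forall s\ge 0 .
\end{align*}

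Next I would invoke Lemma \ref{lm:PKPoptLowerBound}, which gives $\mathcal{V}_3(K)=\Tr{P_K-P^*}\ge \alpha_4(\norm{K-K^*}_F)$ with $\alpha_4$ a $\mathcal{K}_\infty$-function. Since $\alpha_4^{-1}$ is again a $\mathcal{K}_\infty$-function and is monotone, applying it to both sides of the displayed estimate yields
\begin{align*}
    \norm{K(s)-K^*}_F \le \alpha_4^{-1}\bigl(\beta(\mathcal{V}_3(K(0)),s) + \gamma(\norm{W}_\infty)\bigr), \quad \forall s\ge 0 .
\end{align*}

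Finally, I would apply the weak triangle inequality (Lemma \ref{lm:weakTriangle}) with the $\mathcal{K}$-function $\alpha_4^{-1}$, an arbitrary $\mathcal{K}_\infty$-function $\rho$ (e.g.\ $\rho=\mathrm{Id}$), $a=\beta(\mathcal{V}_3(K(0)),s)$, and $b=\gamma(\norm{W}_\infty)$, and set
\begin{align*}
    \beta_2(r,s) := \alpha_4^{-1}\circ(\mathrm{Id}+\rho)\circ\beta(r,s), \qquad \gamma_4(t) := \alpha_4^{-1}\circ(\mathrm{Id}+\rho^{-1})\circ\gamma(t) .
\end{align*}
This gives \eqref{eq:KsmallISS}. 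The routine checks that remain are that $\beta_2$ is a $\mathcal{KL}$-function --- for fixed $s$ it is a composition of $\mathcal{K}$-functions, hence $\mathcal{K}$, and for fixed $r$ it is a zero-at-zero increasing function applied to $\beta(r,\cdot)$, hence decreasing to $0$ as $s\to\infty$ --- and that $\gamma_4$ is a $\mathcal{K}_{[0,d_1)}$-function, which holds because post-composing the $\mathcal{K}_{[0,d_1)}$-function $\gamma$ with the $\mathcal{K}_\infty$-functions $\mathrm{Id}+\rho^{-1}$ and $\alpha_4^{-1}$ preserves continuity, strict monotonicity, vanishing at zero, and the domain $[0,d_1)$.

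Since each step merely invokes an already-proven result, I do not expect a genuine obstacle; the only point needing a little care is the bookkeeping of comparison-function classes and domains --- in particular making sure $\gamma_4$ is defined on exactly $[0,d_1)$ so that the statement matches the format of Definition \ref{def:smallISS}, and noting that global existence of $K(s)$ and its remaining in $\mathcal{G}$ are inherited directly from Theorem \ref{thm:smallISSGradientFlow} rather than requiring a separate argument.
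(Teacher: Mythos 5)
Your proof is correct and follows essentially the same route as the paper: invoke the small-disturbance ISS estimate on $\mathcal{V}_3$ from Theorem~\ref{thm:smallISSGradientFlow}, apply $\alpha_4^{-1}$ from Lemma~\ref{lm:PKPoptLowerBound}, and split the composite bound with the weak triangle inequality (Lemma~\ref{lm:weakTriangle}). You simply spell out the construction of $\beta_2$ and $\gamma_4$ and the bookkeeping of comparison-function classes more explicitly than the paper does.
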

\begin{proof}
    Since system \eqref{eq:gradientflow_con} is small-disturbance ISS, when $\norm{W}_\infty < d_1$, it holds 
    \begin{align*}
        \mathcal{V}_3(K(s)) \le \beta_2'(\mathcal{V}_3(K(0)), s) + \gamma_4'(\norm{W}_\infty), \quad \forall s \ge 0,
    \end{align*}
    where $\beta_2'$ is a $\mathcal{KL}$-function and $\gamma_4'$ is a $\mathcal{K}_{[0,d_1)}$-function. According to Lemma \ref{lm:PKPoptLowerBound} and recalling that $\mathcal{V}_3(K(s)) = \Tr{P(s) - P^*}$, we have
    \begin{align*}
        \norm{K(s) - K^*}_F \le \alpha_4^{-1}\left(\beta_2'(\mathcal{V}_3(K(0)), s) + \gamma_4'(\norm{W}_\infty)\right).
    \end{align*}
    Using Lemma \ref{lm:weakTriangle}, we can obtain \eqref{eq:KsmallISS}.
\end{proof}

\subsection{Perturbed Natural Gradient Flow}
In this subsection, we will show that the perturbed natural gradient flow is small-disturbance ISS. It follows from \eqref{eq:TrPKPopt} that
\begin{align*}
    \mathcal{J}_2(K) - \mathcal{J}_2(K^*) = \innprod{K-K^*}{R(K-K^*)}_{Y_K}.
\end{align*}
Hence, the objective function can be viewed as a quadratic function over the Riemannian manifold $(\mathcal{G}, \innprod{\cdot}{\cdot}_{Y_K})$. As seen in the expression of $\nabla \mathcal{J}_2(K)$, the magnitude of the gradient dependents on $Y_K$, and $Y_K$ may tend to infinity when $K \to \partial \mathcal{G}$, and tend to zero when $\norm{K}_F \to \infty$ (see the illustrative one-dimensional system in \eqref{eq:oneDExample}). The non-isotropic property of the magnitude of the gradient may slow down the convergence of the gradient flow. To handle the non-isotropic property, in \cite{Amari1998Why,Amari1998}, the natural gradient was proposed as a way to modify the standard gradient search direction according to the Riemannian structure of the parameter space.   

Over the Riemannian manifold $(\mathcal{G}, \innprod{\cdot}{\cdot}_{Y_K})$, the steepest-descent direction can be obtained by solving
\begin{align}\label{eq:Naturalsteepest-descent}
    \min_{E} \innprod{E}{\nabla \mathcal{J}_2(K)}, \quad \text{subject to } \innprod{E}{E}_{Y_K} = 1.
\end{align}
By the method of Lagrange multipliers, the solution of \eqref{eq:Naturalsteepest-descent} is 
\begin{align*}
    E = -\nabla \mathcal{J}_2(K)Y_K^{-1}/ \innprod{\nabla \mathcal{J}_2(K)Y_K^{-1}}{\nabla \mathcal{J}_2(K)Y_K^{-1}}_{Y_K}^{1/2}.
\end{align*}
The natural gradient of $\mathcal{J}_2(K)$ over the Riemannian manifold $(\mathcal{G}, \innprod{\cdot}{\cdot}_{Y_K})$, denoted by $\grad{\mathcal{J}_2(K)}$, is 
\begin{align*}
    \grad{\mathcal{J}_2(K)} := \nabla \mathcal{J}_2(K) Y_K^{-1} = 2(RK - B^TP_K).
\end{align*}
Considering the perturbation, the natural gradient flow is
\begin{align}\label{eq:GradientFlow_NaturalCont}
    \frac{\de K(s)}{\de s} = - 2\eta (RK(s) - B^TP(s)) + W(s).
\end{align}

The following lemma is introduced to pave the foundation for the proof of small-disturbance ISS property of system \eqref{eq:GradientFlow_NaturalCont}.
\begin{lemma}\label{lm:PdiffNatural}
    For any $K\in\mathcal{G}$, we have
    \begin{align}\label{eq:KKoptK'Geo}
        2\innprod{K-K^*}{R(K-K')}_{Y^*} = \Tr{P_K - P^*} + \innprod{K-K^*}{R(K-K^*)}_{Y^*}.
    \end{align}
\end{lemma}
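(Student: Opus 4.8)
The plan is to reduce the claimed identity \eqref{eq:KKoptK'Geo} to a trace identity that was already derived inside the proof of Lemma \ref{lm:MKlowerbound}. Recall that, taking the trace of \eqref{eq:PKPoptYopt} and using the cyclic property of the trace, one obtained
$$\Tr{P_K - P^*} = \innprod{K-K'}{R(K-K')}_{Y^*} - \innprod{K'-K^*}{R(K'-K^*)}_{Y^*},$$
since $M_K = (K-K')^T R (K-K')$ and $\innprod{X}{Z}_{Y^*} = \Tr{X^T Z Y^*}$ (with $K' = R^{-1}B^TP_K$ as in Lemma \ref{lm:MKlowerbound}). I would begin the proof by recording this relation.

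The remaining work is purely algebraic. I would decompose $K - K^* = (K-K') + (K'-K^*)$, write $u := K-K'$ and $v := K'-K^*$, and expand both sides of \eqref{eq:KKoptK'Geo} using bilinearity of the weighted inner product. The one elementary fact I would invoke is the symmetry of the form $(X,Z)\mapsto\innprod{X}{RZ}_{Y^*} = \Tr{X^T R Z Y^*}$, i.e. $\innprod{X}{RZ}_{Y^*} = \innprod{Z}{RX}_{Y^*}$, which follows from transposition together with the cyclic property of the trace (using that $R$ and $Y^*$ are symmetric). Then the left-hand side of \eqref{eq:KKoptK'Geo} becomes $2\innprod{u}{Ru}_{Y^*} + 2\innprod{v}{Ru}_{Y^*} = 2\innprod{u}{Ru}_{Y^*} + 2\innprod{u}{Rv}_{Y^*}$, while on the right-hand side, substituting the trace identity and expanding $\innprod{u+v}{R(u+v)}_{Y^*}$ gives $\bigl(\innprod{u}{Ru}_{Y^*} - \innprod{v}{Rv}_{Y^*}\bigr) + \bigl(\innprod{u}{Ru}_{Y^*} + 2\innprod{u}{Rv}_{Y^*} + \innprod{v}{Rv}_{Y^*}\bigr)$, and the $\innprod{v}{Rv}_{Y^*}$ terms cancel, leaving $2\innprod{u}{Ru}_{Y^*} + 2\innprod{u}{Rv}_{Y^*}$, which matches.

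I do not anticipate any genuine obstacle here; the only point requiring a little care is the bookkeeping of the weighted inner product and the verification of its symmetry, both of which are immediate. As an alternative one could bypass the intermediate trace identity and instead integrate the Lyapunov identity \eqref{eq:AoptPKPoptDiff} against $Y^*$ directly, but reusing the already-established identity from Lemma \ref{lm:MKlowerbound} is the shortest route.
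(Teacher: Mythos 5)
Your proof is correct, and it is essentially the same algebra as the paper's: both amount to completing the square in the decomposition $K-K^* = (K-K') + (K'-K^*)$ and using the Lyapunov relation \eqref{eq:AoptPKPoptDiff}. The only organizational difference is that the paper completes the square at the matrix level inside \eqref{eq:AoptPKPoptDiff}, then invokes Lemma \ref{lm:LyaEquaIntegral} to re-derive the integral representation before tracing, whereas you reuse the already-traced identity $\Tr{P_K - P^*} = \innprod{K-K'}{R(K-K')}_{Y^*} - \innprod{K'-K^*}{R(K'-K^*)}_{Y^*}$ from Lemma \ref{lm:MKlowerbound} and do the same completion of squares at the level of the weighted inner product; both are sound and of comparable length.
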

\begin{proof}
    By completing the squares, we have
    \begin{align*}
     \begin{split}
         &(K - K')^T R (K - K')  - (K' - K^*)^T R (K' - K^*)  \\
         &= (K - K')^T R (K - K^*) + (K - K^*)^T R (K - K') - (K - K^*)^T R (K - K^*).  
    \end{split}   
    \end{align*}    
    Therefore, we can rewrite \eqref{eq:AoptPKPoptDiff} as
    \begin{align*}
    \begin{split}
        &(A-BK^*)^T(P_K - P^*) + (P_K - P^*)(A-BK^*) + (K - K')^T R (K - K^*) \\
        &+(K - K^*)^T R (K - K') - (K - K^*)^T R (K - K^*) = 0. 
    \end{split}
    \end{align*}
    Since $A-BK^*$ is Hurwitz, by Lemma \ref{lm:LyaEquaIntegral} we have
    \begin{align}\label{eq:PKdiffRewrite}
    \begin{split}
        &P_K - P^* = \int_{0}^{\infty} e^{(A-BK^*)^{T}t } [(K - K')^T R (K - K^*) \\
        &+(K - K^*)^T R (K - K') - (K - K^*)^T R (K - K^*)]e^{(A-BK^*)t } \de t. 
    \end{split}
    \end{align}
    Taking the trace of \eqref{eq:PKdiffRewrite} and using the cyclic property of trace in Lemma \ref{lm:traceCyclic}, we have \eqref{eq:KKoptK'Geo}.
\end{proof}

Next, we will prove the small-disturbance ISS property of the perturbed natural gradient flow.

\begin{theorem} \label{thm:ISSnature}
Given $Q \succ 0$, system  \eqref{eq:GradientFlow_NaturalCont} is small-disturbance ISS with respect to $W$.
\end{theorem}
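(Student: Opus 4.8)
The plan is to mimic the strategy used for Theorem \ref{thm:smallISSGradientFlow}, namely to exhibit a small-disturbance ISS-Lyapunov function and invoke Theorem \ref{thm:smallISSSufficient}. The natural candidate is again $\mathcal{V}_3(K) = \mathcal{J}_2(K) - \mathcal{J}_2(K^*) = \Tr{P_K - P^*}$, which by the discussion preceding Theorem \ref{thm:smallISSGradientFlow} is already known to be a size function for $(\mathcal{G}, K^*)$ (smooth, positive definite with respect to $K^*$, and coercive by Lemma \ref{lm:coercive}). The content of the proof is therefore to verify the dissipation inequality of Definition \ref{def:smallISSLyap}: along the natural gradient flow \eqref{eq:GradientFlow_NaturalCont}, we must find a $\mathcal{K}$-function $\alpha_1$ and a continuous positive definite $\alpha_2$ such that $\norm{W}_F \le \alpha_1(\mathcal{V}_3(K))$ implies $\innprod{\nabla \mathcal{V}_3(K)}{-2\eta(RK-B^TP_K)+W} \le -\alpha_2(\mathcal{V}_3(K))$.

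First I would compute $\frac{\de}{\de s}\mathcal{V}_3(K(s)) = \innprod{\nabla\mathcal{J}_2(K)}{-2\eta(RK-B^TP_K)+W}$. Using $\nabla\mathcal{J}_2(K) = 2(RK-B^TP_K)Y_K = \nabla\mathcal{J}_2(K)$ and $K' = R^{-1}B^TP_K$ so that $RK-B^TP_K = R(K-K')$, the dominant term is $-2\eta\innprod{2R(K-K')Y_K}{2R(K-K')} = -4\eta\innprod{R(K-K')}{R(K-K')}_{Y_K}$; this is where the natural-gradient structure pays off, since the $Y_K$ weighting matches the Riemannian inner product. The key estimate is to lower bound $\innprod{R(K-K')}{R(K-K')}_{Y_K} = \Tr{Y_K M_K R}$ — or, more usefully, $\Tr{M_K}$-type quantities — by a $\mathcal{K}$-function of $\mathcal{V}_3(K) = \Tr{P_K - P^*}$. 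Here Lemma \ref{lm:PdiffNatural} is the crucial tool: from \eqref{eq:KKoptK'Geo}, $2\innprod{K-K^*}{R(K-K')}_{Y^*} = \Tr{P_K-P^*} + \innprod{K-K^*}{R(K-K^*)}_{Y^*} \ge \Tr{P_K-P^*}$, and Cauchy-Schwarz (Lemma \ref{lm:CSInequality}) in the $Y^*$-weighted inner product gives $\innprod{K-K^*}{R(K-K^*)}_{Y^*}^{1/2}\innprod{K-K'}{R(K-K')}_{Y^*}^{1/2} \ge \tfrac12\Tr{P_K-P^*}$. Combined with \eqref{eq:TrPKPopt} rewritten as $\mathcal{J}_2(K)-\mathcal{J}_2(K^*) = \innprod{K-K^*}{R(K-K^*)}_{Y_K}$ and a comparison between the $Y_K$- and $Y^*$-weighted norms (using $\eigmin{Y^*}, \eigmax{Y^*}$ and bounds on $Y_K$), one should be able to bound $\innprod{K-K^*}{R(K-K^*)}_{Y^*}$ below by a $\mathcal{K}$-function of $\Tr{P_K-P^*}$ and hence obtain $\innprod{K-K'}{R(K-K')}_{Y^*} \ge (\text{some }\mathcal{K}\text{-function of }\mathcal{V}_3)$. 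Converting the $Y^*$-weight back to a $Y_K$-weight (for the flow's actual dissipation term) will require care, since $Y_K$ is unbounded above as $K\to\partial\mathcal{G}$ — but for the natural gradient flow the dissipation term $\innprod{R(K-K')}{R(K-K')}_{Y_K}$ has $Y_K$ in the favorable direction when $Y_K$ is large, so this should work in our favor rather than against us.

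Once a bound $\frac{\de}{\de s}\mathcal{V}_3 \le -c\,\eta\,\xi_2(\mathcal{V}_3(K))^2 + \innprod{\nabla\mathcal{J}_2(K)}{W}$ with $\xi_2\in\mathcal{K}$ is in hand, I would handle the perturbation term exactly as in \eqref{eq:derivativeVo1}: by Cauchy-Schwarz and Young's inequality, $\innprod{\nabla\mathcal{J}_2(K)}{W} \le \tfrac{c\eta}{2}\xi_2(\mathcal{V}_3)^2 + \frac{1}{2c\eta}\norm{W}_F^2$ — but wait, this requires $\norm{\nabla\mathcal{J}_2(K)}_F \ge \xi_2(\mathcal{V}_3)$, which is precisely Lemma \ref{lm:gradientClassK} (or a variant with the appropriate $\mathcal{K}$-function). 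Then $\norm{W}_F \le \eta\sqrt{c}\,\xi_2(\mathcal{V}_3(K))$ yields $\frac{\de}{\de s}\mathcal{V}_3 \le -\tfrac{c\eta}{2}\xi_2(\mathcal{V}_3(K))^2 =: -\alpha_2(\mathcal{V}_3(K))$ with $\alpha_1 := \eta\sqrt{c}\,\xi_2 \in \mathcal{K}$ and $\alpha_2$ continuous positive definite, completing the verification of Definition \ref{def:smallISSLyap}. The main obstacle I anticipate is the bookkeeping in relating the natural-gradient dissipation term $\innprod{R(K-K')}{R(K-K')}_{Y_K}$ to $\Tr{P_K-P^*}$ cleanly enough to extract a genuine $\mathcal{K}$-function (not merely a positive definite one) — this is where Lemma \ref{lm:PdiffNatural} must be combined just right with the eigenvalue bounds on $Y^*$, and where subtleties about the behavior as $\norm{K}_F\to\infty$ versus $K\to\partial\mathcal{G}$ enter. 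The role of the hypothesis $Q\succ 0$ (rather than $Q\succeq 0$) is presumably to keep $Y_K$ controlled via Lemma \ref{lm:YKBound}, ensuring the comparison between weighted norms goes through globally.
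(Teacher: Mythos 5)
Your plan has a genuine gap: you propose to use $\mathcal{V}_3(K) = \mathcal{J}_2(K) - \mathcal{J}_2(K^*)$ alone as the Lyapunov function, but along the natural gradient flow this cannot satisfy the dissipation inequality of Definition~\ref{def:smallISSLyap} with a $\mathcal{K}$-function threshold $\alpha_1$. Concretely, writing $K'=R^{-1}B^TP_K$, one has
$\frac{\de}{\de s}\mathcal{V}_3 = -4\eta\innprod{RK-B^TP_K}{RK-B^TP_K}_{Y_K} + 2\innprod{RK-B^TP_K}{W}_{Y_K}$,
and the cross term must be handled by Cauchy--Schwarz/Young in the $Y_K$-weighted inner product (or an equivalent route), which inevitably produces a residual $\tfrac{1}{\eta}\innprod{W}{W}_{Y_K} \le \tfrac{\Tr{Y_K}}{\eta}\norm{W}_F^2 \le \tfrac{\mathcal{V}_3(K)+\Tr{P^*}}{\eta\eigmin{Q}}\norm{W}_F^2$ by Lemma~\ref{lm:YKBound}; that coefficient \emph{grows linearly} in $\mathcal{V}_3$. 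The dissipation term you aim to lower bound by Lemmas~\ref{lm:eigminYKLowerBound}, \ref{lm:MKlowerbound} and \ref{lm:PdiffNatural}, $\innprod{RK-B^TP_K}{RK-B^TP_K}_{Y_K} \ge \eigmin{Y_K}\eigmin{R}\Tr{M_K}$, yields a bound that grows strictly slower than $\mathcal{V}_3$ (roughly like $\sqrt{\mathcal{V}_3}$, since $\eigmin{Y_K}$ decays like $1/\norm{K-K^*}_F$). So any admissible threshold $\alpha_1(\mathcal{V}_3)$ must \emph{decrease} for large $\mathcal{V}_3$, contradicting $\alpha_1\in\mathcal{K}$. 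Your parenthetical concern (``the main obstacle I anticipate...'') is exactly this wall, and it is not just bookkeeping. Your Young/Cauchy-Schwarz step is also backwards: absorbing $\innprod{\nabla\mathcal{J}_2(K)}{W}$ into a dissipation term of size $\xi_2(\mathcal{V}_3)^2$ would require an \emph{upper} bound $\norm{\nabla\mathcal{J}_2(K)}_F \lesssim \xi_2(\mathcal{V}_3)$, while Lemma~\ref{lm:gradientClassK} gives only a lower bound; moreover $\norm{\nabla\mathcal{J}_2(K)}_F$ is unbounded near $\partial\mathcal{G}$.

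The paper's resolution is to use a composite Lyapunov function $\mathcal{V}_5 = \mathcal{V}_3 + \mathcal{V}_4$ with $\mathcal{V}_4(K) = \tfrac12\innprod{K-K^*}{K-K^*}_{Y^*}$. You correctly flag Lemma~\ref{lm:PdiffNatural} as the crucial ingredient, but the paper uses it differently from what you propose: differentiating $\mathcal{V}_4$ along the flow gives $-2\eta\innprod{K-K^*}{R(K-K')}_{Y^*}$, which by Lemma~\ref{lm:PdiffNatural} equals exactly $-\eta\Tr{P_K-P^*} - \eta\innprod{K-K^*}{R(K-K^*)}_{Y^*}$, i.e., a \emph{linear} dissipation in $\mathcal{V}_3$ plus a $\mathcal{V}_4$-like term. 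Crucially, since $\nabla\mathcal{V}_4 = (K-K^*)Y^*$ carries the fixed weight $Y^*$ rather than $Y_K$, its perturbation term has a constant coefficient $\norm{Y^*}$. The $-\eta\mathcal{V}_3$ from $\mathcal{V}_4$ thus grows exactly in step with the $\mathcal{V}_3$-proportional perturbation coefficient coming from the $\mathcal{V}_3$ half, and the ratio gives the bounded $\mathcal{K}$-function threshold $\xi_2(\mathcal{V}_5) = (b_3\mathcal{V}_5/(2\mathcal{V}_5+2b_1))^{1/2}$. The role of $\mathcal{V}_3$ in $\mathcal{V}_5$ is then mainly to supply coercivity (since $\mathcal{V}_4$ is a quadratic and stays bounded as $K\to\partial\mathcal{G}$, so $\mathcal{V}_4$ alone is not a size function). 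To repair your proof you would need to introduce such an auxiliary quadratic term, not strengthen the bound on the $\mathcal{V}_3$ dissipation.
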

\begin{proof}
Let $\mathcal{V}_4(K) := \frac{1}{2} \innprod{K - K^*}{K - K^*}_{Y^*}$. Clearly, by Lemma \ref{lm:traceIneq}, $\mathcal{V}_4(K)$ is bounded by
\begin{align*}
    \frac{1}{2}\eigmin{Y^*} \norm{K - K^*}_F^2 \leq \mathcal{V}_4(K) \leq \frac{1}{2}\norm{Y^*} \norm{K - K^*}_F^2.
\end{align*}
Then, it holds: 
\begin{align*}
\begin{split}
    &\innprod{\nabla \mathcal{V}_4(K)}{- 2\eta (RK - B^TP_K) + W} \\
    &= -2\eta \innprod{K - K^*}{R(K - K')}_{Y^*} + \innprod{K - K^*}{W}_{Y^*}.
\end{split}
\end{align*}
Recall that $K' = R^{-1}B^T P_K$. According to Lemma \ref{lm:PdiffNatural}, we obtain
\begin{align}\label{eq:Vnnature2}
\begin{split}
    &\innprod{\nabla \mathcal{V}_4(K)}{- 2\eta (RK - B^TP_K) + W} \\
    &= - \eta\Tr{P_K - P^*} - \eta\innprod{K - K^*}{R(K - K^*)}_{Y^*} + \innprod{K - K^*}{W}_{Y^*}\\
    &\le - \eta\Tr{P_K - P^*} - \eta\innprod{K - K^*}{R(K - K^*)}_{Y^*}  \\
    &+ \innprod{K - K^*}{ K - K^*}_{Y^*}^{\frac{1}{2}}\innprod{W}{ W}_{Y^*}^{\frac{1}{2}}\\
    &\le - \eta\Tr{P_K - P^*} - \frac{\eta\eigmin{R}}{2}\innprod{K - K^*}{K - K^*}_{Y^*} + \frac{\norm{Y^*}}{2\eta\eigmin{R}} \norm{W}_F^2,
\end{split}
\end{align}
where the first inequality follows from Lemma \ref{lm:CSInequality}, and the last inequality is according to Young's inequality and the trace inequality in Lemma \ref{lm:traceIneq}.

Differentiating $\mathcal{V}_3(K) = \mathcal{J}_2(K) - \mathcal{J}_2(K^*)$ and considering \eqref{eq:JcGradient} yield
\begin{align*}
\begin{split}
    &\innprod{\nabla \mathcal{V}_3(K)}{- 2\eta (RK - B^TP_K) + W} 
    = -4 \eta \innprod{RK - B^TP_K}{ RK - B^TP_K}_{Y_K} \\
    &+ 2\innprod{RK - B^TP_K}{W}_{Y_K} \\
    &\le -3 \eta \innprod{RK - B^TP_K}{ RK - B^TP_K}_{Y_K} + \frac{1}{\eta} \innprod{W}{ W}_{Y_K},
\end{split}
\end{align*} 
where the last line is obtained by Lemma \ref{lm:CSInequality} and Young's inequality. Using Lemma \ref{lm:YKBound} and the trace inequality in Lemma \ref{lm:traceIneq}, we have
\begin{align}\label{eq:VvNaturalFlow2}
    &\innprod{\nabla \mathcal{V}_3(K)}{- 2\eta (RK - B^TP_K) + W}  \le -3 \eta \innprod{RK - B^TP_K}{ RK - B^TP_K}_{Y_K} \nonumber\\
    &+ \frac{\mathcal{V}_3(K) + \Tr{P^*}}{\eta \eigmin{Q}} \norm{W}_F^2.
\end{align}

Let $\mathcal{V}_5(K) := \mathcal{V}_3(K) + \mathcal{V}_4(K)$. Since $\mathcal{V}_3$ is a size function and $\mathcal{V}_4$ is positive definite with respect to $K^*$, $\mathcal{V}_5$ is also a size function. It follows from \eqref{eq:Vnnature2} and \eqref{eq:VvNaturalFlow2} that 
\begin{align}\label{eq:Vcnaturalflow1}
\begin{split}
    \innprod{\nabla \mathcal{V}_5(K)}{- 2\eta (RK - B^TP_K) + W} &\le -\eta \mathcal{V}_3(K) - {\eta \eigmin{R}} \mathcal{V}_4(K) \\
    &+ \frac{\mathcal{V}_3(K) + b_1}{\eta b_2} \norm{W}_F^2,
\end{split}    
\end{align}
where
\begin{align*}
    b_1 := \frac{\norm{Y^*}\eigmin{Q}}{2\eigmin{R}} + \Tr{P^*}, \, b_2 := \eigmin{Q}, \, b_3 := \eta^2\eigmin{R}\eigmin{Q}.
\end{align*}
Without losing generality, assume that $\eigmin{R}\le1$. Then, it follows from \eqref{eq:Vcnaturalflow1} that
\begin{align*}
\begin{split}
    \innprod{\nabla \mathcal{V}_5(K)}{- 2\eta (RK - B^TP_K) + W} &\le -\eta\eigmin{R} \mathcal{V}_5(K)  + \frac{\mathcal{V}_5(K) + b_1}{\eta b_2} \norm{W}_F^2.
\end{split}        
\end{align*}
Thus, if 
\begin{align*}
    \norm{W}_F \le \left(\frac{b_3 \mathcal{V}_5(K)}{2\mathcal{V}_5(K)+2b_1} \right)^{\frac{1}{2}} =: \xi_2(\mathcal{V}_5(K)), 
\end{align*}
it is guaranteed that
\begin{align*}
    \innprod{\nabla \mathcal{V}_5(K)}{- 2\eta (RK - B^TP_K) + W} &\le -\frac{\eta\eigmin{R}}{2} \mathcal{V}_5(K).
\end{align*}
Since $
\frac{\de \xi_2(r)}{\de r} = \frac{1}{2}\xi_2^{-1}(r) \frac{ 2b_1 b_3}{(2r+2b_1)^2} > 0, \quad \forall r>0$, $\xi_2$ is a $\mathcal{K}$-function, and its range is $[0, \sqrt{\frac{b_3}{2}})$. Consequently, $\mathcal{V}_5$ is a small-disturbance ISS-Lyapunov function. According to Theorem \ref{thm:smallISSSufficient}, we conclude that system \eqref{eq:GradientFlow_NaturalCont} is small-disturbance ISS. 
\end{proof}

\subsection{Perturbed Newton Gradient Flow}
By applying the results of Section 3, we will show that the perturbed Newton gradient flow is small-disturbance ISS. The Newton gradient descent method was first adopted in \cite{Kleinman1968} for solving the LQR problem, and it converges to the optimum at a quadratic convergence rate. The Newton direction is derived from the second-order Taylor series approximation of $\mathcal{J}_2(K+E)$, which, according to Lemma \ref{lm:J2Expansion}, can be expressed as
\begin{align*}
    \mathcal{J}_2(K+E) = \mathcal{J}_2(K) + \innprod{E}{2(RK-B^TP_K)}_{Y_K} + \innprod{E}{RE}_{Y_K} + O(\norm{E}_F^2).
\end{align*}
By minimizing the second-order Taylor series approximation of $\mathcal{J}_2(K)$ over $E$, the Newton direction is obtained as $-(K - R^{-1}B^T P_K)$. 
 Considering the perturbation, the Newton gradient flow is
\begin{align}\label{eq:GradientFlow_Newton}
    \frac{\de K(s)}{\de s} = -\eta(K(s) - R^{-1}B^TP(s)) + W(s).
\end{align}
\begin{theorem}  \label{thm:smallISSNewton}
Given $Q \succ 0$, system \eqref{eq:GradientFlow_Newton} is small-disturbance ISS with respect to $W$.
\end{theorem}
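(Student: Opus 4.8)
The plan is to exhibit a small-disturbance ISS-Lyapunov function and invoke Theorem~\ref{thm:smallISSSufficient}, mirroring the structure of the proof of Theorem~\ref{thm:ISSnature}. The natural candidate is again $\mathcal{V}_5(K) = \mathcal{V}_3(K) + \mathcal{V}_4(K)$, where $\mathcal{V}_3(K) = \mathcal{J}_2(K) - \mathcal{J}_2(K^*) = \Tr{P_K - P^*}$ and $\mathcal{V}_4(K) = \frac{1}{2}\innprod{K-K^*}{K-K^*}_{Y^*}$; this is already known to be a size function for $(\mathcal{G}, K^*)$. The key observation is that the Newton vector field $-\eta(K - R^{-1}B^T P_K) = -\eta(K - K')$ is exactly $Y_K^{-1}$ times half the natural gradient evaluated on the right (indeed $\nabla\mathcal{J}_2(K) Y_K^{-1} = 2(RK - B^T P_K)$, so $K - K' = \frac{1}{2} R^{-1}(\nabla\mathcal{J}_2(K) Y_K^{-1})$). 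So the directional-derivative computations will run in close parallel with the natural-gradient case, with the Lyapunov algebra supplied by Lemma~\ref{lm:PdiffNatural} and Lemma~\ref{lm:YKBound}.

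First I would compute $\innprod{\nabla\mathcal{V}_4(K)}{-\eta(K-K') + W}$. Since $\nabla\mathcal{V}_4(K)$ corresponds to $Y^*(K-K^*)$ under $\innprod{\cdot}{\cdot}$, this equals $-\eta\innprod{K-K^*}{K-K'}_{Y^*} + \innprod{K-K^*}{W}_{Y^*}$. Here I would insert $R$ and $R^{-1}$ to write $\innprod{K-K^*}{K-K'}_{Y^*} = \innprod{K-K^*}{R(K-K')}_{R^{-1}, Y^*}$ — more carefully, I want to reduce to the identity in Lemma~\ref{lm:PdiffNatural}, which gives $2\innprod{K-K^*}{R(K-K')}_{Y^*} = \Tr{P_K - P^*} + \innprod{K-K^*}{R(K-K^*)}_{Y^*}$. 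To use it directly I would instead take $\mathcal{V}_4$ weighted so that its gradient produces the factor $R(K-K^*)$; concretely, redefine (or introduce a companion) $\widetilde{\mathcal{V}}_4(K) = \frac{1}{2}\innprod{K-K^*}{R(K-K^*)}_{Y^*}$, whose gradient pairs against the Newton direction to give $-\eta\innprod{K-K^*}{R(K-K')}_{Y^*} + \innprod{K-K^*}{RW}_{Y^*} = -\frac{\eta}{2}\Tr{P_K - P^*} - \frac{\eta}{2}\innprod{K-K^*}{R(K-K^*)}_{Y^*} + \innprod{K-K^*}{RW}_{Y^*}$, using Lemma~\ref{lm:PdiffNatural}. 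The cross term is then absorbed by Cauchy--Schwarz (Lemma~\ref{lm:CSInequality}) and Young's inequality, producing $-\frac{\eta}{2}\Tr{P_K - P^*} - \frac{\eta\eigmin{R}}{4}\innprod{K-K^*}{R(K-K^*)}_{Y^*} + c\norm{W}_F^2$ for a suitable constant $c$ depending on $\norm{Y^*}$, $\norm{R}$, $\eigmin{R}$.

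Second, I would differentiate $\mathcal{V}_3$ along the Newton flow: $\innprod{\nabla\mathcal{V}_3(K)}{-\eta(K-K') + W} = -2\eta\innprod{RK-B^TP_K}{K-K'}_{Y_K} + \innprod{2(RK-B^TP_K)}{W}_{Y_K} = -2\eta\innprod{R(K-K')}{K-K'}_{Y_K} + 2\innprod{R(K-K')}{W}_{Y_K}$. The first term is $\le -2\eta\eigmin{R}\innprod{K-K'}{K-K'}_{Y_K}$, a negative definite term in $M_K$-type quantities, and the cross term is again handled by Lemma~\ref{lm:CSInequality} and Young's inequality, leaving a residual of the form $+\frac{1}{\eta}\innprod{W}{W}_{Y_K}$; bounding $\Tr{Y_K}$ from above by $\frac{\Tr{P_K}}{\eigmin{Q}} = \frac{\mathcal{V}_3(K) + \Tr{P^*}}{\eigmin{Q}}$ via Lemma~\ref{lm:YKBound} (this is where $Q \succ 0$ is used) converts it to $\frac{\mathcal{V}_3(K) + \Tr{P^*}}{\eta\eigmin{Q}}\norm{W}_F^2$, exactly as in the natural-gradient proof.

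Adding the two, with $\mathcal{V}_5 := \mathcal{V}_3 + \widetilde{\mathcal{V}}_4$, the decay terms combine to something bounded above by $-\eta' \mathcal{V}_5(K)$ for $\eta' = \min\{\eta, \eta\eigmin{R}/2\}$ (again assuming WLOG $\eigmin{R}\le 1$), while the $W$-dependent terms combine to $\frac{\mathcal{V}_5(K) + b_1'}{\eta b_2'}\norm{W}_F^2$ for constants $b_1', b_2' > 0$. Therefore, when $\norm{W}_F \le \big(\frac{\eta^2 b_2' \eta' \,\mathcal{V}_5(K)}{2\mathcal{V}_5(K) + 2b_1'}\big)^{1/2} =: \xi_3(\mathcal{V}_5(K))$, we get $\innprod{\nabla\mathcal{V}_5(K)}{-\eta(K-K')+W} \le -\frac{\eta'}{2}\mathcal{V}_5(K)$. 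One checks $\xi_3$ is of class $\mathcal{K}$ (its derivative is positive, as in the computation for $\xi_2$), so $\mathcal{V}_5$ is a small-disturbance ISS-Lyapunov function and Theorem~\ref{thm:smallISSSufficient} yields the claim.

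The main obstacle I anticipate is purely bookkeeping: getting the $\mathcal{V}_4$ term to interface cleanly with Lemma~\ref{lm:PdiffNatural}, which is stated with the weight $R$ built into the inner product on one side. One must either reweight $\mathcal{V}_4$ as above (the cleanest route, since $R \succ 0$ is constant and invertible) or carry $R^{\pm 1}$ factors through the Cauchy--Schwarz step; either way the only real care needed is to ensure that after absorbing the cross term by Young's inequality there is still a strictly negative definite term left in $\innprod{K-K^*}{R(K-K^*)}_{Y^*}$, which requires choosing the Young split so that the coefficient of the retained negative term is a fixed positive fraction of $\eta$. Everything else is a direct transcription of the Theorem~\ref{thm:ISSnature} argument.
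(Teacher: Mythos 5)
Your proposal is correct and takes essentially the same route as the paper: the paper's entire proof is the one-line remark that the argument of Theorem~\ref{thm:ISSnature} goes through with the Lyapunov function $\mathcal{V}_6(K) = \mathcal{V}_3(K) + \frac{1}{2}\innprod{K-K^*}{R(K-K^*)}_{Y^*}$, which is exactly your reweighted $\widetilde{\mathcal{V}}_4$ added to $\mathcal{V}_3$. You correctly identified that inserting the $R$-weight is precisely what makes Lemma~\ref{lm:PdiffNatural} apply to the Newton vector field (since $K-K' = \frac{1}{2}R^{-1}\grad{\mathcal{J}_2(K)}$), and your bookkeeping through Cauchy--Schwarz, Young's inequality, and Lemma~\ref{lm:YKBound} matches the intended argument.
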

\begin{proof}
The proof follows from the proof of Theorem \ref{thm:ISSnature} by defining $\mathcal{V}_6(K) = \mathcal{V}_3(K) + \frac{1}{2} \innprod{K - K^*}{R(K - K^*)}_{Y^*}$.
\end{proof}

\section{Conclusions}
In this paper, we studied the small-disturbance ISS property of continuous-time gradient flows on an open subset of certain Euclidean space. In the framework of small-disturbance ISS, the transient behavior, the convergence speed, and the robustness to the perturbations of gradient flows can be well quantified. As a by-product, a Lyapunov characterization of small-disturbance ISS is given. Upon specification to the policy optimization of the LQR problem, three kinds of perturbed gradient flows, including standard gradient flow, natural gradient flow, and Newton gradient flow, were studied in greater details. In particular, they are all small-disturbance ISS.

\bibliographystyle{alpha}
\bibliography{reference}

\end{document}